\numberwithin{equation}{section}
\newcommand{\na}{\mathbb N}
\newcommand{\Z}{\mathbb Z}
\newcommand{\R}{\mathbb R}
\newcommand{\sq}{sequence}
\newcommand{\xt}{$(X,T)$}
\newcommand{\tl}{topological}
\newcommand{\im }{invariant measure}
\newcommand{\inv}{invariant}
\newcommand{\ds}{dynamical system}
\newcommand{\htop}{\mathsf h_{\mathsf{top}}}
\newcommand{\sep}{\mathsf{sep}}
\newcommand{\graph}{\mathsf{graph}}
\newcommand{\mL}{L\kern-0.08cm\char39}
\begin{document}
	
	\title{Minimal spaces with cyclic group of homeomorphisms
		   \thanks{The first and third authors were supported by the NCN (National Science Center, Poland) grant 2013/08/A/ST1/00275. The second author was supported by VEGA grant  1/0786/15.}
		   }
			

	\author{Tomasz Downarowicz   \and
		\mL ubom\'\i r Snoha \and Dariusz Tywoniuk
	}

	
	\institute{Tomasz Downarowicz \at
		Institute of Mathematics of the Polish Academy of Science, \'Sniadeckich 8, 00-956 Warszawa, Poland \\
		\email{downar@pwr.wroc.pl}           
	\and
	\mL ubom\'\i r Snoha \at
	Department of Mathematics, Faculty of Natural Sciences, Matej Bel University, Tajovsk\'eho 40, 974 01 Bansk\'a Bystrica, Slovakia\\ 	\email{Lubomir.Snoha@umb.sk}           
	\and
	 Dariusz Tywoniuk \at
	 Institute of Mathematics and Computer Science, Wroclaw University of Technology, Wybrze\.ze Wyspia\'nskiego 27, 50-370 Wroc\l aw, Poland\\ \email{dariusz.tywoniuk@pwr.wroc.pl}}
	
	\date{Received: date / Accepted: date}

	\maketitle
	
	\begin{abstract}
		There are two main subjects in this paper. \\
		1) For a \tl\ \ds\ \xt\ we study the \tl\ entropy of its ``functional envelopes'' (the action of $T$ by left composition on the space of all continuous self-maps or on the space of all self-homeomor\-phisms of $X$). In particular we prove that for zero-dimensional spaces $X$ both entropies are infinite except when $T$ is equicontinuous (then both equal zero).\\
		2) We call \emph{Slovak space} any compact metric space whose homeomorphism group is cyclic and generated by a minimal homeomorphism. Using Slovak spaces we provide examples of (minimal) systems \xt\ with positive entropy, yet, whose functional envelope on homeomorphisms has entropy zero (answering a question posed by Kolyada and Semikina). Finally, also using Slovak spaces, we resolve a long standing open problem whether the circle is a unique non-degenerate continuum admitting minimal continuous transformations but only invertible: No, some Slovak spaces are such, as well.
		\keywords{Group of homeomorphisms \and Topological entropy \and Functional envelope \and Almost 1-1 extension \and Minimal space \and Continuum}
		\subclass{37B05 (primary) \and 37B40 \and 54H20 (secondary)}
	\end{abstract}
	
\begin{acknowledgements}
Part of the work was carried out during a visit of the first and third author in the Department of Mathematics, Faculty of Natural Sciences, Matej Bel University, Bansk\'a Bystrica, Slovakia. We gratefully acknowledge the hospitality of Matej Bel University.
\end{acknowledgements}
	
\maketitle

\section{Introduction}
A (\tl) \ds\ \xt, where $X$ is a compact metric space and $T:X\to X$ is a continuous transformation,
gives rise to several ``hyper-systems''. For example, $T$ acts naturally on $2^X$, the space of
all compact subsets of $X$ equipped with the Hausdorff metric. Likewise, $T$ extends to a continuous
(in the weak-star topology) operator on the compact space of all (Borel) probability measures on $X$,
by the formula $(T\mu)(A) = \mu(T^{-1}(A))$ (see e.g. \cite{BS}).

In the same spirit (although now the domains are not necessarily compact), several versions of so-called \emph{functional envelope} of a dynamical system have been considered. In~\cite{AuKS} the authors introduce and study the (non-compact) ``hyper-system'' $(C(X,X), F_T)$, whose phase space $C(X,X)$ consists of all continuous self-maps of $X$, and the transformation $F_T: C(X,X) \to C(X,X)$ is defined by $F_T(\varphi) = T \circ \varphi$ ($\varphi \in C(X,X)$). Notice that $C(X,X)$ is a \tl\ semigroup with respect to the uniform topology and $F_T$ is uniformly continuous (because $T$ is uniformly continuous).
Since $X$ is compact, the uniform metric agrees with the compact-open topology and it is also equivalent to the Hausdorff metric applied to the graphs of functions $X\to X$ (which are closed in $X\times X$). However, these two metrics are in general not uniformly equivalent, therefore the two versions, ``uniform" and ``Hausdorff", of the functional envelope (both addressed in \cite{AuKS}) may differ in some dynamical properties which depend on the metric (for example the \tl\ entropy in non-compact systems does).\footnote{On the other hand, it is easy to see that the essential dynamical properties of the functional envelopes do not depend on the metric applied on $X$.} In the present paper we focus exclusively on the uniform metric, nonetheless some of our results extend without much effort to the Hausdorff metric as well. Also note that the action of $F_T$ is by the composition with $T$ on the left. The action by the composition on the right is always a contraction (by which, throughout this note, we understand that it does not increase the distance) in the uniform metric on $C(X,X)$ (it is an isometry if $T$ is surjective), so it is not very interesting in the studies of entropy.

Now suppose additionally that the map $T$ is a homeomorphism. Then the space $H(X,X)$ of all self-homeomorphisms of $X$ is invariant under $F_T$ and $(H(X,X), F_T)$ becomes another kind of functional envelope (also addressed in \cite{KS}). Now $H(X,X)$ is a topological group, in general not compact and not even complete in the uniform metric. It is thus customary to consider this group with the equivalent, but complete, symmetric metric obtained as the sum of the uniform distance and the uniform distance between inverses. However, the second part of this metric is easily seen to be invariant under $F_T$. Thus, despite the fact that it makes the action of $F_T$ distal, it does not contribute to entropy (which is our main subject of interest), therefore, from our point of view it is sufficient to equip $H(X,X)$ with the uniform distance (the lack of completeness will not bother us at all).
\medskip

It is known that the \tl\ entropy of the ``hyper-system'' induced on probability measures behaves
quite radically: it equals zero if and only if $\htop(X,T)=0$, otherwise it is infinite
(see \cite{GW1}, \cite{GW2}). For the hyper-system on $2^X$ the si\-tua\-tion is different: If $T$ has positive entropy then the hyper-system has infinite entropy, otherwise the hyper-system may have zero, finite positive or infinite entropy (see \cite{KO}). In this paper we will be interested in the \emph{\tl} entropy of the functional envelopes. Since no other entropy will be addressed, in what follows we will skip
the adjective ``\tl''. Likewise, all \tl\ spaces in this note are \emph{metric}, hence also this adjective will be skipped.
\medskip

Let us gather some basic known facts and open problems concerning the entropy of the functional envelopes:
\begin{enumerate}
\item [(1)] The entropy of $(C(X,X), F_T)$ is always not smaller than that of \xt; to see this it
suffices to consider the subsemigroup of constant self-maps, which is conjugate to \xt.
\item [(2)] On the other hand, it is not known whether the same inequality holds for $(H(X,X),F_T)$, see~\cite{KS}.
\item [(3)] There are examples of zero entropy continuous maps on countable spaces and on the
interval, for which $(C(X,X), F_T)$ has infinite entropy \cite{AuKS}.
\item [(4)] On the other hand, analogous examples for $(H(X,X), F_T)$ are missing.
\item [(5)] S. Kolyada and J. Semikina \cite{KS} conjecture that the functional envelope $(C(X,X), F_T)$ has entropy either zero or infinity. At present it is known that the conjecture holds true for all Peano continua and for all compact spaces with continuum many connected components \cite{KS}. Otherwise the conjecture remains open. The same problem can be posed for the functional envelope $(H(X,X),F_T)$ in
case $T$ is a homeomorphism.
\end{enumerate}

\medskip

In this paper we resolve some questions of this type:
\begin{enumerate}
	\item[(a)] If $T$ is a self-homeomorphism of a compact zero-dimensional space $X$ then both functional
	envelopes $(C(X,X),F_T)$ and $(H(X,X),F_T)$ have infinite entropy except when $T$ is equicontinuous,
	in which case both entropies are equal to zero, see Theorem~\ref{T:zerodim}.
	This answers positively the question (2) and the Kolyada--Semikina conjecture (5) (also the version for 		
	$(H(X,X),F_T)$) for homeomorphisms on compact zero-dimensional spaces.
	\item[(b)] As an immediate consequence we get that there exist zero entropy homeomorphisms such that
	$(H(X,X),F_T)$ has infinite entropy, providing examples called for in (4). We also give
	a simple direct example, see the end of Section~\ref{S:zero-dim case}.
	\item[(c)] There exists a positive (even infinite) entropy homeomorphism $T$ of a compact space $X$
	such that $(H(X,X),F_T)$ has entropy zero, see Corollary~\ref{C:Tpositive FT zero}. This proves that
	the general question (2) has a 	negative answer.
\end{enumerate}

The item (c) above is solved using a new class of spaces, which we call \emph{Slovak spaces}\footnote{The definition was postulated by the Slovak member of the team of authors and the examples were constructed during the visit of the Polish authors in Slovakia. Besides, the notion of Polish spaces already exists, and we believe that Slovakia also deserves to have its own class. And finally, our example of a Slovak space resembles a country with many mountain ranges, just like Slovakia.}, defined by the combination of two properties: the existence of a minimal homeomorphism, say $T$, and nonexistence of homeomorphisms other than the powers of~$T$. For such spaces we prove that the functional envelope
$(H(X,X), F_T)$ always has entropy zero. Of course, it is completely unclear that such spaces exist, not to mention that we would like the system \xt\ to have positive (or even infinite) entropy. Thus, large part of the paper is devoted to delivering appropriate examples, see Section~\ref{S:Slovak}.

As a byproduct, nonetheless an interesting and important outcome of our work, we prove
that among Slovak spaces we find long searched for examples of non-degenerate continua of type (1,0) (admitting a minimal homeomorphism but not a minimal non-invertible map) other than the circle (see \cite{BKS} for the formulation of the problem).

\section{Preliminaries}

A {\it continuum} is a nonempty, compact and connected space. A continuum is {\it decomposable} if it is a union of two proper subcontinua, otherwise it is called {\it indecomposable}. The {\it composant of a point} $x$ is the union of all proper subcontinua of $X$ which contain $x$. By a \emph{composant in $X$} we will mean the composant of some point. 	The continuum is said to be \emph{irreducible between two different points $x,y$} if there is no proper subcontinuum containing both $x$ and $y$. We will say, for short, that $x,y$ is an \emph{irreducible pair}.

We recall some known facts on composants and irreducible pairs; they all can be found in (or easily deduced from) \cite[pp. 83, 97, 196--205]{Na}.
\begin{enumerate}
	\item[(C1)] Unless $X$ is degenerate, the composants are dense connected subsets of $X$ and they cover $X$.
 	\item[(C2)] A subset $A$ of $X$ contains an irreducible pair if and only if there exist proper composants, but none of them contains $A$.
	\item[(C3)] A decomposable continuum has either just one composant $X$ or three composants: $X$ and two proper composants that cover $X$.
	\item[(C4)] If $X$ is a non-degenerate indecomposable continuum then its composants are pairwise disjoint, hence form a partition of $X$ into sets of first category, and thus there are uncountably many of them.
\end{enumerate}

We make an additional observation (which is not included in \cite{Na}):

\begin{lemma}\label{compan}
If $\varphi:X\to Y$ is a continuous map between continua and $Y$ has more than one composant, then each composant $\alpha$ of $X$ is mapped either into a proper composant of $Y$ or onto $Y$.
\end{lemma}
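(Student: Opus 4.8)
The plan is to dichotomize on whether the image $\varphi(\alpha)$ contains an irreducible pair of $Y$, and to feed this into fact (C2). Fix a composant $\alpha$ of $X$, say $\alpha$ is the composant of a point $x$; since $Y$ has more than one composant, $Y$ possesses proper composants, so (C2) applies with $A = \varphi(\alpha)$. If $\varphi(\alpha)$ contains no irreducible pair of $Y$, then (C2) immediately yields that some proper composant of $Y$ contains $\varphi(\alpha)$, which is exactly the first alternative of the lemma.

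The substantive case is when $\varphi(\alpha)$ does contain an irreducible pair $u,v$ of $Y$. Here I would pull the pair back: choose $a,b \in \alpha$ with $\varphi(a)=u$ and $\varphi(b)=v$. The point of working inside a single composant is that $a$ and $b$ can be joined inside $\alpha$ by a subcontinuum: by the very definition of the composant of $x$, there are proper subcontinua $K_a, K_b \subseteq X$, each containing $x$, with $a \in K_a$ and $b \in K_b$. Their union $K = K_a \cup K_b$ is again a subcontinuum (it is connected because both pieces meet at $x$) and it lies inside $\alpha$. Applying $\varphi$, the set $\varphi(K)$ is a subcontinuum of $Y$ containing both $u$ and $v$; since $u,v$ is irreducible, no proper subcontinuum of $Y$ can contain both, forcing $\varphi(K)=Y$. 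As $K \subseteq \alpha$ this gives $Y = \varphi(K) \subseteq \varphi(\alpha)$, i.e.\ $\varphi$ maps $\alpha$ onto $Y$, which is the second alternative.

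The only delicate point, and the step I would be most careful about, is the reduction that lets me connect $a$ and $b$ by a subcontinuum contained in $\alpha$: a composant is in general only a \emph{union} of proper subcontinua, not itself a subcontinuum, so one cannot simply treat $\varphi(\alpha)$ as the image of a continuum. Passing through the common point $x$ via $K_a \cup K_b$ is what repairs this, and it relies on the standard fact that in a non-degenerate continuum every point lies in a proper subcontinuum (so that $K_a, K_b$ exist). I would also dispose of the trivial degenerate case of $X$ at the outset, and record that an irreducible pair consists of two distinct points (so that the pullback is not vacuous and $\varphi$ is genuinely seen to be onto). Everything else is a direct application of (C2) together with the definitions of composant and of irreducible pair.
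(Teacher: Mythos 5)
Your proposal is correct and follows essentially the same route as the paper: apply (C2) to $\varphi(\alpha)$, and in the case where an irreducible pair of $Y$ lies in the image, pull it back to two points of $\alpha$, join them by a subcontinuum of $X$ contained in $\alpha$, and use irreducibility to force the image of that subcontinuum to be all of $Y$. Your explicit construction $K=K_a\cup K_b$ through the base point merely spells out the step the paper states as ``a pair of preimages is contained in $\alpha$, hence in a subcontinuum of $\alpha$''.
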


\begin{proof} By (C2), if $\varphi(\alpha)$ is not contained in a proper composant then it contains an irreducible pair. A pair of preimages (of this pair) is contained in $\alpha$, hence in a subcontinuum
$C$ of $\alpha$. The image of $C$ is a continuum which contains an irreducible pair, so it is not proper, thus $\varphi(\alpha)  = Y$. \hfill $\square$
\end{proof}

A dynamical system $(X,T)$ is called {\it minimal} if every forward orbit is dense
(if $T$ is a homeomorphism it suffices that every full orbit is dense). It is well known that a minimal system contains neither subinvariant nor superinvariant (i.e., such that $T(A)\subset A$ or $T(A)\supset A$, respectively) nonempty proper closed subsets (see \cite[Lemma 3.10]{BOT}). It is also well known that if $X$ is connected then every minimal map $T$ is \emph{totally minimal}, i.e., all iterates $T^n$, except of identity, are minimal.

A space $X$ is called of \emph{type $(i,j)$}, where $i,j\in\{0,1\}$, depending on whether there exists a minimal homeomorphism on $X$ (then $i=1$, otherwise $i=0$) and whether there exists a minimal non-invertible continuous map on $X$ (then $j=1$, otherwise $j=0$). For example, the interval
is of type $(0,0)$ (due to the fixed point property), the circle is of type $(1,0)$ (and up to now it was the unique non-degenerate continuum known to be of this type), the two-torus is of type $(1,1)$ and the pinched torus (two points glued together) is of type $(0,1)$.

We take this opportunity to prove a simple, yet never observed fact concerning decomposable continua with
three composants.

\begin{theorem}\label{threecomposants}
If $X$ is a decomposable continuum with three composants then it admits no minimal maps (is of type $(0,0)$).\footnote{None of the zeros in type $(0,0)$ follows trivially from the periodic point property. A counterexample exists in \cite[Proposition 1]{HMP}: the continuum $X$ is formed by two concentric circles connected by a spiral (which is a continuous injective image of the real line), whose one ``tail" wraps around the smaller circle from outside and the other ``tail" approaches the larger circle from inside. This continuum has three composants (the complements of each circle and $X$).
Any irrational rotation of the circles extends easily to a self-homeomorphism of $X$ having no periodic points. It is also easy to find noninvertible continuous surjections without periodic points.}
\end{theorem}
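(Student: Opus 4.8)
The plan is to assume, towards a contradiction, that $X$ carries a minimal map $T$ and then to manufacture a nonempty proper closed subset that is superinvariant, which a minimal system cannot possess. Since a continuum is connected, $T$ would be totally minimal, so every power $T^n$ is minimal as well; in particular each $T^n$ is surjective and admits no nonempty proper closed sub- or superinvariant set. First I would fix the combinatorial data: by (C3) the space has exactly two proper composants $\kappa_1,\kappa_2$, which cover $X$; by (C2) the whole space $X$ contains an irreducible pair $a,b$ (proper composants exist, but none contains $X$). Irreducibility gives $b\notin\kappa(a)$ and $a\notin\kappa(b)$, so $\kappa(a)$ and $\kappa(b)$ are proper and distinct, whence $\{\kappa(a),\kappa(b)\}=\{\kappa_1,\kappa_2\}$; relabel so that $\kappa_1=\kappa(a)$.

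The key geometric observation, and the step that removes the main obstacle, concerns $D_1:=X\setminus\kappa_1$. Composants are dense (C1), so $\overline{D_1}$ could a priori be all of $X$; to rule this out I would write $X=A\cup B$ with $A,B$ proper subcontinua (decomposability), chosen so that $a\in A$. Then $A$ is a proper subcontinuum containing $a$, hence $A\subseteq\kappa(a)=\kappa_1$, so $D_1\subseteq X\setminus A\subseteq B$ and therefore $\overline{D_1}\subseteq B\subsetneq X$. Since $b\in D_1$, the set $\overline{D_1}$ is nonempty, closed, and proper, being trapped inside the ``opposite'' subcontinuum $B$. This sidesteps the delicate question of whether $\kappa_1$ is open or whether $D_1$ itself is closed.

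Next I would apply Lemma~\ref{compan} with $Y=X$ (which has three composants): each of $\kappa_1,\kappa_2$ is mapped by $T$ either into a proper composant or onto $X$. If some $\kappa_i$ is mapped onto $X$, which is the genuinely non-invertible possibility, then the construction inside the proof of Lemma~\ref{compan} produces a proper subcontinuum $C\subseteq\kappa_i$ with $T(C)=X$; since $C\subseteq X=T(C)$, the continuum $C$ is a nonempty proper closed superinvariant set, contradicting minimality. This disposes of exactly the case that obstructs a naive ``permutation of composants'' argument valid only for homeomorphisms.

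In the remaining case both $T(\kappa_1)$ and $T(\kappa_2)$ lie in proper composants. Here I would use surjectivity: since $T(\kappa_1)\cup T(\kappa_2)=X$ and no proper composant equals $X$, the two images cannot both sit in the same proper composant, so either $T(\kappa_1)\subseteq\kappa_1$, or $T(\kappa_1)\subseteq\kappa_2$ and then necessarily $T(\kappa_2)\subseteq\kappa_1$; in both situations $T^2(\kappa_1)\subseteq\kappa_1$. Writing $S=T^2$ (still minimal and surjective), the inclusion $S(\kappa_1)\subseteq\kappa_1$ gives $S^{-1}(D_1)\subseteq D_1$, and surjectivity of $S$ then yields $D_1\subseteq S(D_1)$, hence $\overline{D_1}\subseteq S(\overline{D_1})$. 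Thus $\overline{D_1}$ is a nonempty proper closed superinvariant set for the minimal map $S$, the desired contradiction. I expect the only real subtleties to be the non-invertible ``onto $X$'' case and the need to keep the invariant set proper despite the density of composants; both are handled by the two observations above, namely the subcontinuum $C$ extracted from Lemma~\ref{compan} and the containment $\overline{D_1}\subseteq B$.
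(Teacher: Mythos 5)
Your proof is correct and follows essentially the same route as the paper's: composants are mapped into proper composants via Lemma~\ref{compan} together with the non-existence of superinvariant nonempty proper closed sets, surjectivity then forces $T^2(\kappa_1)\subseteq\kappa_1$, and $\overline{X\setminus\kappa_1}$ becomes a nonempty proper closed superinvariant set for the minimal map $T^2$, contradicting total minimality. The only local difference is how properness of $\overline{X\setminus\kappa_1}$ is secured: the paper argues that otherwise every proper subcontinuum would be nowhere dense, contradicting decomposability, while you trap $X\setminus\kappa(a)$ directly inside the piece $B$ of a decomposition $X=A\cup B$ with $a\in A\subseteq\kappa(a)$ --- a slightly more direct use of the same hypothesis.
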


\begin{proof}
Let $\alpha$ and $\beta$ be the two proper composants.
Define $A=\overline{X\setminus\alpha}$ and $B=\overline{X\setminus\beta}$ (both are nonempty). If $A=B=X$ then $\alpha$ and $\beta$ have empty interiors implying that every proper subcontinuum is nowhere dense
(by (C3) and (C2), every proper subcontinuum is a subset of either $\alpha$ or $\beta$). Since $X$, being decomposable, is a union of two such subcontinua, this is impossible. Thus, we have e.g. $A\neq X$. Let $T:X\to X$ be a minimal map. Every proper subcontinuum $C$ is mapped by $T$ to a proper subcontinuum (otherwise it would be a superinvariant nonempty proper closed set). Thus $T(\alpha)$ contains no irreducible pairs, hence $T(\alpha)\neq X$. By Lemma~ \ref{compan}, $\alpha$ is mapped into a proper composant (likewise, so is $\beta$). Since $T$ is surjective and $\alpha\cup\beta = X$, both proper composants cannot be mapped into the same one. Now it is easy to see that either $T(A) \supset A$ and $T(B) \supset B$, or $T(A) \supset B$ and $T(B) \supset A$. In either case $T^2(A) \supset A$. Since $A$ is nonempty, closed and proper, $T^2$ is not minimal, a contradiction with total minimality on the connected space~$X$. \hfill $\square$
\end{proof}

A dynamical system $(X,T)$ is an {\it extension} of $(Y,S)$, or $(Y,S)$ is a {\it factor} of $(X,T)$, if there exists a continuous surjection $\pi\colon X\to Y$ such that $\pi T=S\pi$. The map $\pi$ is called a {\it factor map}. The factor map $\pi$ is an {\it almost 1-1 factor map} and $(X,T)$ is an {\it almost 1-1 extension} of $(Y,S)$ if the set of singleton fibers, i.e., the set $\{x\in X: \, \pi^{-1}(\pi (x)) =x\}$ is residual in $X$ (it suffices that it is dense). A factor of a minimal system is minimal and an almost 1-1 extension of a minimal system is minimal (see e.g. \cite{D}).

A point $x\in X$ is (forward) {\it periodically recurrent}\footnote{In the literature, this property is often called \emph{regular recurrence}, however, we believe that \emph{periodic recurrence} is more informative.} if for every open neighborhood $U$ of $x$ there exists $k\in\na$ such that $T^{kn}(x)\in U$ for all $n\in\na$.

\emph{Adding machines} are infinite compact monothetic zero-dimensional groups. Since every adding machine is homeomorphic to the Cantor set, we will denote both by the letter $\mathfrak C$. By fixing a topological generator $c_0$ and defining $h:\mathfrak C\to \mathfrak C$ by $h(c) = c\oplus c_0$ (where $\oplus$ denotes the addition in the adding machine) we obtain a minimal equicontinuous dynamical system $(\mathfrak C,h)$ which is called an \emph{odometer}. Odometers are characterized as minimal systems in which every point is periodically recurrent (see e.g. \cite{D}).

{\it Solenoids} are quotient spaces of the product $[0,1]\times \mathfrak C$ of the interval with the Cantor set, with respect to the relation identifying the points $(1,c)$ and $(0,h(c))$, where $h$ is as described above. {\it Generalized solenoids} are quotient spaces of the product $[0,1]\times \mathfrak C$ of the interval with the Cantor set, with respect to the relation identifying the points $(1,c)$ and $(0,h(c))$, where $h$ is any minimal homeomorphism of the Cantor set $\mathfrak C$. Generalized solenoids are indecomposable continua.

\medskip
Since in this paper we will be addressing \tl\ entropy not only in compact \ds s, but also in non-compact ones, let us recall the definition of topological entropy in this more general setup (see, e.g., \cite{W}).
Let $(Z;\varrho)$ be a space and $T:Z\to Z$ be uniformly continuous. For every $n\geq 1$ the function
$\varrho_{n}(x,y):=\max\{\varrho(T^j(x),T^j(y)):0\le j\le {n-1}\}$ defines a metric on $Z$ equivalent with $\varrho$. Fix an $n\ge 1$ and $\epsilon >0$ and let $K$ be a compact set in $Z$.
A subset $E\subset K$ is called \emph{$(n,\epsilon)$-separated\/}, if
for any two distinct points $x,y\in E$, $\varrho_{n}(x,y)>\epsilon$.
Since $K$ is compact, $E$ is finite.
Denote by $\sep(n,\epsilon; K)$  the maximal cardinality of an
$(n,\epsilon)$-separated set in $K$.
The \emph{topological entropy of $T$ on $K$}
is defined by
$\htop (T, K):=\lim_{\epsilon \to 0}\limsup_{n\to\infty}
\frac{1}{n}\log \sep (n,\epsilon; K)$
and the \emph{topological entropy} of $T$ is defined by
$\htop (T):= \sup_K \htop (T,K)$
where $K$ ranges over all compact subsets of $Z$.
\medskip

If $\varphi$ is a map, we denote its graph by $\graph(\varphi)$. If $x$ is a continuity/discontinuity point of $\varphi$, also the point $(x,\varphi(x)) \in \graph(\varphi)$ will sometimes be called a continuity/discontinuity point of $\varphi$. We hope this will cause no misunderstandings.

\section{Entropy of functional envelopes; the homogeneous and the zero-dimensional cases}\label{S:zero-dim case}

We begin by solving (positively) the problem (2) for homogeneous spaces. Recall that $X$ is \emph{homogeneous} if for any two points $x,y\in X$ there exists a homeomorphism $f: X\to X$ such that $f(x)=y$. If $X$ is compact, we have even more: for each $x\in X$ there exists a \emph{compact} family $\mathcal F_x \subseteq H(X,X)$ such that for every $y\in X$ there is an $f\in \mathcal F_x$ satisfying $f(x)=y$. Indeed, by Effros Theorem \cite[Theorem~2.1]{E} (see also \cite{Un}), the evaluation map $E_{x}\colon H(X,X)\to X$, given by $E_{x}(h)=h(x)$, is both continuous and open. Since $X$ is homogeneous, this map is also surjective. Moreover, $H(X,X)$ is complete in the equivalent symmetric uniform metric. Now the existence of a compact family $\mathcal F_x$ follows directly from \cite[Lemma 6]{B}.

\begin{proposition}\label{P:homo0}
Let $T: X\to X$ be a homeomorphism of a homogeneous compact space. Then the entropy of the functional envelope $(H(X,X), F_T)$ is at least as large as that of \xt.
\end{proposition}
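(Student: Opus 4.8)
The plan is to adapt the constant-map argument used for $(C(X,X),F_T)$ in item~(1) of the introduction. That argument fails verbatim here because constant maps are not homeomorphisms; the substitute for the ``diagonal copy of \xt'' will instead be supplied by the compact family $\mathcal F_{x_0}\subseteq H(X,X)$ guaranteed above (via the Effros theorem together with \cite[Lemma~6]{B}). Fix a point $x_0\in X$ and consider the evaluation map $E_{x_0}\colon H(X,X)\to X$, $E_{x_0}(h)=h(x_0)$. Two observations drive the proof. First, $E_{x_0}$ intertwines the two dynamics: since $F_T^{\,j}(h)=T^{\,j}\circ h$, we have $E_{x_0}(F_T(h))=T(h(x_0))=T(E_{x_0}(h))$, so $E_{x_0}$ is a semiconjugacy from $(H(X,X),F_T)$ onto \xt. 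Second, $E_{x_0}$ does not increase distances in the uniform metric $d$: for the metric $\varrho$ on $X$ one has $\varrho(h_1(x_0),h_2(x_0))\le \sup_z \varrho(h_1(z),h_2(z))=d(h_1,h_2)$.

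First I would use these two facts to transport separated sets from $X$ up into $H(X,X)$. Fix $n\ge 1$ and $\epsilon>0$ and let $E\subset X$ be a maximal $(n,\epsilon)$-separated set for \xt. Using that $E_{x_0}(\mathcal F_{x_0})=X$, for each $x\in E$ choose $h_x\in\mathcal F_{x_0}$ with $h_x(x_0)=x$; distinct $x$ yield distinct $h_x$. Writing $d_n$ for the $n$-step (Bowen) metric associated with $d$ and $F_T$, the non-expansiveness of evaluation, applied to each iterate, gives for $x\ne y$ in $E$
\[
d_n(h_x,h_y)=\max_{0\le j<n} d\bigl(T^{\,j}\circ h_x,\,T^{\,j}\circ h_y\bigr)\ \ge\ \max_{0\le j<n}\varrho\bigl(T^{\,j}(x),T^{\,j}(y)\bigr)\ >\ \epsilon .
\]
Hence $\{h_x:x\in E\}$ is an $(n,\epsilon)$-separated subset of $H(X,X)$ of cardinality $\sep(n,\epsilon;X)$, and --- this is the whole point --- it is contained in the single compact set $\mathcal F_{x_0}$, which does not depend on $n$. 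Therefore $\sep(n,\epsilon;\mathcal F_{x_0})\ge \sep(n,\epsilon;X)$, and passing to $\tfrac1n\log$, then $\limsup_{n}$, then $\lim_{\epsilon\to0}$ yields $\htop(F_T,\mathcal F_{x_0})\ge\htop(X,T)$. Since the entropy of $(H(X,X),F_T)$ is the supremum of $\htop(F_T,K)$ over all compact $K$, the proposition follows.

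The genuinely delicate point --- and the reason item~(2) is subtle at all --- is not the separation estimate, which is automatic from non-expansiveness of $E_{x_0}$, but rather keeping the separated sets of growing cardinality inside \emph{one fixed} compact subset of the non-compact group $H(X,X)$; otherwise the non-compact definition of entropy (a supremum over compact sets) would not register them. This is exactly what the compactness of $\mathcal F_{x_0}$ provides, and it is the place where homogeneity is used in an essential way. I would therefore present the argument with the two structural observations about $E_{x_0}$ stated first, and flag the use of $\mathcal F_{x_0}$ as the crucial ingredient rather than the (routine) inequality chain.
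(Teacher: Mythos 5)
Your proposal is correct and follows essentially the same route as the paper's proof: both use the evaluation map $E_{x_0}$ as a non-expansive semiconjugacy and the compact family $\mathcal F_{x_0}$ (from Effros' theorem) to transport $(n,\epsilon)$-separated sets from $X$ into a single fixed compact subset of $H(X,X)$. Your write-up merely spells out the separation estimate that the paper leaves implicit.
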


\begin{proof}
Fix some $x\in X$ and let $E_{x}: H(X,X)\to X$ be the evaluation map. Clearly $E_{x}$ is a contraction (i.e., does not increase the uniform distance), $E_{x}(\mathcal F_x)=X$ and $E_{x}\circ F_T=T\circ E_{x}$. This implies that for every $n$ and $\epsilon$ there are at least as many $(n,\epsilon)$-separated (under $F_T$) homeomorphisms in the compact set $\mathcal F_x$ as there are $(n,\epsilon)$-separated (under $T$) points in $X$. This clearly implies the assertion. \hfill $\square$
\end{proof}

The next theorem resolves completely the question about the entropy of the functional envelope on the group of homeomorphisms in compact zero-dimensional spaces. Recall that a homeomorphism $T:X\to X$ is called \emph{equicontinuous} if the family of all forward and backward iterates of $T$ is equicontinuous. We will be needing the following characterization of equicontinuous homeomorphisms on compact zero-dimensional spaces. Because the proof of this ``folklore'' fact is hard to find, we provide it below.

\begin{lemma}\label{L:equi}
Let $X$ be a compact zero-dimensional space. A homeomorphism $T:X\to X$ is equicontinuous if and only if the
\ds\ \xt\ is a union of odometers and periodic orbits, if and only if every point $x\in X$ is periodically recurrent under $T$.
\end{lemma}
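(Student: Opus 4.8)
The plan is to prove the chain of equivalences by establishing $(2)\Leftrightarrow(3)$ together with $(1)\Leftrightarrow(3)$; the implication $(3)\Rightarrow(1)$ will be the crux.

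First I would dispose of $(2)\Leftrightarrow(3)$, which is essentially formal. For $(2)\Rightarrow(3)$ note that every point lies in an odometer or a periodic orbit, and in either subsystem it is periodically recurrent (for odometers this is exactly the cited characterization, for periodic orbits it is trivial); since the whole orbit stays inside that subsystem, periodic recurrence there is the same as in $X$. For $(3)\Rightarrow(2)$, observe that if $x$ is periodically recurrent then its set of return times to any neighborhood contains an arithmetic progression $k\na$, hence is syndetic, so $x$ is uniformly recurrent and $\overline{\{T^n x:n\in\Z\}}$ is minimal. This orbit closure is then a minimal system all of whose points are periodically recurrent, so by the cited characterization it is an odometer (when infinite) or a periodic orbit (when finite). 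As the orbit closures cover $X$, this yields $(2)$.

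Next, $(1)\Rightarrow(3)$, where I would use zero-dimensionality to turn equicontinuity into combinatorics. Given a clopen partition $\mathcal P$, choose (by equicontinuity of all forward and backward iterates, via a Lebesgue-number argument) a finer clopen partition $\mathcal R$ so small that points in a common atom of $\mathcal R$ have, under every $T^n$ with $n\in\Z$, their images in a common atom of $\mathcal P$. Then $\mathcal R$ refines the join $\bigvee_{n\in\Z}T^n\mathcal P$, which is therefore a finite clopen partition, and it is visibly $T$-invariant, so $T$ merely permutes its finitely many atoms. Taking $\mathcal P$ to refine $\{U,X\setminus U\}$ for a clopen neighborhood $U$ of a given point $x$, the atom $Q\subseteq U$ containing $x$ returns to itself after $k$ steps, where $k$ is the length of the cycle of $Q$; hence $T^{kn}(x)\in Q\subseteq U$ for all $n$, which is periodic recurrence.

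The main obstacle is $(3)\Rightarrow(1)$, that is, upgrading the pointwise property to the uniform one. The plan is to show that, on a compact zero-dimensional space, equicontinuity is equivalent to finiteness of $\bigvee_{n\in\Z}T^n\mathcal P$ for every clopen $\mathcal P$ (finiteness gives a cofinal family of $T$-invariant clopen partitions, hence a compatible metric making $T$ an isometry). Fixing $\mathcal P=\{P_1,\dots,P_r\}$, consider the itinerary factor map $\pi_{\mathcal P}\colon X\to\{1,\dots,r\}^{\Z}$ defined by $\pi_{\mathcal P}(x)_n=i$ iff $T^n x\in P_i$; it conjugates $T$ to the shift on the subshift $Y:=\pi_{\mathcal P}(X)$, and $\bigvee_{n\in\Z}T^n\mathcal P$ is finite exactly when $Y$ is finite. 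Since periodic recurrence passes to factors, every point of $Y$ is periodically recurrent, so, just as in $(3)\Rightarrow(2)$, every orbit closure in $Y$ is an odometer or a periodic orbit. But an infinite odometer cannot be a subshift: an equicontinuous subshift is finite, since agreement on a long central window $[-N,N]$ would force agreement at the origin under all shifts, hence determine the whole point and bound $\card(Y)$ by $r^{2N+1}$. Therefore every orbit closure in $Y$ is a periodic orbit, so every point of $Y$ is periodic; and a subshift all of whose points are periodic is finite (equivalently, an infinite subshift always contains a non-periodic point). Thus $Y$ is finite, as required. The two delicate points I expect to have to defend are precisely these last ones -- that an equicontinuous subshift is finite, and that a subshift with only periodic points is finite -- whose combination is exactly what rules out the ``Toeplitz-type'' infinite behavior that pointwise periodic recurrence would otherwise allow.
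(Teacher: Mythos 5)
Your argument is correct in outline but follows a genuinely different route from the paper's. For equicontinuity implies periodic recurrence, the paper goes through distality and the Halmos--von Neumann theorem (the space decomposes into minimal equicontinuous pieces, each a finite orbit or an odometer), whereas you obtain it by the elementary observation that equicontinuity forces each clopen join $\bigvee_{n\in\Z}T^n\mathcal P$ to be finite; both work, and yours avoids the structure theory. For the crux, periodic recurrence implies equicontinuity, the paper passes to the subshift of $\mathcal P$-names exactly as you do, but then finishes in one stroke: if that subshift were infinite it would contain two distinct forward-asymptotic points (the cited Bryant--Walters corollary), and a point with a distinct forward-asymptotic companion cannot be periodically recurrent. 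You instead first upgrade pointwise periodic recurrence to minimality of orbit closures via syndetic return times, classify those closures as odometers or periodic orbits, rule out infinite odometers among subshifts (your expansiveness argument there is fine), and then invoke the statement that a subshift all of whose points are periodic is finite.

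That last statement is the one place where real work remains, and you rightly flag it. It is true, but it is not something to wave at: the naive Baire-category and accumulation-point attempts do not close, and the cleanest proof is precisely the Bryant--Walters asymptotic-pair theorem that the paper uses directly --- if $y\neq y'$ are forward asymptotic and both periodic, then $d(\sigma^n y,\sigma^n y')$ takes only finitely many values, so tending to $0$ forces it to vanish eventually, whence $y=y'$ by injectivity of the shift on a subshift. Once you supply that citation (or an equivalent compactness argument producing a non-periodic ``defect'' point in the closure of periodic points of unbounded minimal period), your proof is complete; but note that it then rests on the same external input as the paper's, reached by a longer detour through minimality and the odometer characterization. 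The remaining pieces --- the equivalence of pointwise periodic recurrence with the union-of-odometers decomposition, and the passage from a finite invariant clopen algebra back to equicontinuity --- are routine and essentially match the paper.
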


\begin{proof}
If $T$ is an equicontinuous homeomorphism then it is distal, hence the space is a union of minimal sets
(see e.g. \cite{Aus}). By the Halmos--von Neumann Theorem, any minimal equicontinuous system is the rotation of a compact monothetic group, and every zero-dimensional compact monothetic group is either finite cyclic or an adding machine (i.e., the corresponding \ds\ is an odometer). This obviously implies that each point in the whole system is periodically recurrent.

Now suppose all points are periodically recurrent. Let $\mathcal P$ be a partition of $X$ into finitely many closed-and-open (we will say \emph{clopen}) sets of diameter at most $\epsilon$. The collection of all $\mathcal P$-names of the points of $X$ is closed and shift-\inv, so it is a subshift. Suppose that the collection of the $\mathcal P$-names
is infinite. Then, by \cite[Corollary on page 63]{BW}, there are two different and forward asymptotic $\mathcal P$-names. Taking disjoint neighborhoods of these names (in the symbolic space), we see that at least one of them fails to be periodically recurrent (a contradiction occurs along common multiples of the periods with which the points visit their selected neighborhoods).
So, there are only finitely many $\mathcal P$-names, i.e., the partition $\mathcal P$ generates (via the dynamics) a finite sigma-algebra with clopen atoms.
Letting $\delta>0$ be the minimal distance between these atoms, we see that any points $x,y\in X$ with $d(x,y)<\delta$ satisfy, for each $n\ge 0$ the condition: $T^nx$, $T^ny$ belong to the same atom of $\mathcal P$. Thus $d(T^nx$, $T^ny)<\epsilon$. The same is true for $T^{-1}$, concluding the proof of equicontinuity. \hfill $\square$
\end{proof}

\medskip

\begin{theorem}\label{T:zerodim}
Let $T:X\to X$ be a self-homeomorphism of a compact zero-dimensional space. Then the entropies of $(C(X,X),F_T)$ and $(H(X,X),F_T)$ are either both zero or both infinite. They are equal to zero if and only if $T$ is equicontinuous.
\end{theorem}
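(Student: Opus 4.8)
The plan is to prove the theorem in two halves, using the characterization of equicontinuity supplied by Lemma~\ref{L:equi}. \emph{First}, I would dispose of the equicontinuous case. If $T$ is equicontinuous, then by Lemma~\ref{L:equi} the system \xt\ is a finite union of odometers and periodic orbits, so the entropy of \xt\ itself is zero. The key observation is that equicontinuity of $T$ should force low complexity of $F_T$ on both $C(X,X)$ and $H(X,X)$: the uniform distance $\varrho_n(\varphi,\psi)=\max_{0\le j<n}\sup_{x}d(T^j\varphi(x),T^j\psi(x))$ is controlled, via the equicontinuity of the family $\{T^j\}$, by a single modulus-of-continuity estimate independent of $n$. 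Concretely, if $d(\varphi(x),\psi(x))<\delta$ uniformly (with $\delta$ the equicontinuity modulus for $\epsilon$), then $\varrho_n(\varphi,\psi)<\epsilon$ for \emph{all} $n$. Hence an $(n,\epsilon)$-separated set is already $(1,\delta)$-separated, so on any fixed compact set $K$ the separated-set cardinalities $\sep(n,\epsilon;K)$ stay bounded in $n$, giving $\htop(F_T,K)=0$ and therefore zero entropy for both envelopes.

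\emph{Second}, and this is the substantial direction, I would show that if $T$ is \emph{not} equicontinuous then both envelopes have infinite entropy. Here Lemma~\ref{L:equi} gives a point $x_0\in X$ that is not periodically recurrent. Since by item (1) of the introduction the entropy of $(C(X,X),F_T)$ always dominates that of \xt, and since $(C(X,X),F_T)\ge(H(X,X),F_T)$ in the sense that $H(X,X)$ is an $F_T$-invariant subset, it suffices to produce, inside $H(X,X)$, a family of homeomorphisms whose $F_T$-orbits generate arbitrarily large exponential separation. The idea is to exploit the failure of periodic recurrence: zero-dimensionality lets me work with clopen partitions, and the non-periodically-recurrent behaviour of $x_0$ means that along a suitable subsequence of times the $T$-orbit visits a clopen set in an ``aperiodic'' pattern. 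I would then encode binary (or larger-alphabet) data into a combinatorially large, $(n,\epsilon)$-separated family of homeomorphisms, each built by permuting clopen pieces so that after composing with successive powers of $T$ the images of a fixed test point (or fixed clopen set) realize all the prescribed patterns. Because the number of clopen pieces available can be taken arbitrarily large, the exponential growth rate of the separated family can be made to exceed any prescribed bound, yielding infinite entropy.

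The main obstacle I anticipate is the combinatorial construction in the non-equicontinuous case: I must build genuine \emph{self-homeomorphisms} (not merely continuous self-maps, so that the stronger conclusion for $H(X,X)$ follows) that are simultaneously $(n,\epsilon)$-separated under $F_T$ and numerous enough for unbounded entropy. The delicate point is that $F_T$ acts by left composition, so separation of $\varphi$ and $\psi$ at time $j$ is measured by $\sup_x d(T^j\varphi(x),T^j\psi(x))$; I need the chosen permutations of clopen sets to remain separated after being ``smeared'' by the powers $T^j$, which requires that $T$ does not collapse the relevant pieces together too quickly --- precisely what the failure of periodic recurrence should guarantee. A clean way to organize this is to reduce to a symbolic/odometer-free core: since not all points are periodically recurrent, Lemma~\ref{L:equi} and the argument in its proof (two forward-asymptotic but distinct $\mathcal P$-names, via \cite{BW}) expose a genuinely positive-complexity symbolic factor on which I can transplant the standard ``constant-maps'' lower bound of item (1), but amplified by a free parameter (the number of clopen target cells) that drives the entropy to infinity. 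Stitching the finitely many refined clopen partitions together into honest homeomorphisms, and verifying the separation survives all iterates $T^j$ for $0\le j<n$ uniformly in the partition size, is where the real work lies.
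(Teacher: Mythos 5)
Your treatment of the equicontinuous direction is correct and essentially equivalent to the paper's: the paper passes to an invariant metric making $T$ (hence $F_T$) an isometry, while you use the equicontinuity modulus directly to show that an $(n,\epsilon)$-separated family is already $(1,\delta)$-separated; either way the separated-set counts are bounded in $n$ on every compact $K$ and both entropies vanish.

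The non-equicontinuous direction, however, has a genuine gap. You correctly locate the source of complexity (a clopen partition $\mathcal P$ whose subshift of names is infinite, hence of complexity $c(n)\ge n+1$, yielding $n$ points that are $(n,\epsilon)$-separated under $T$ for $\epsilon$ the minimal gap between atoms), but the construction you defer as ``where the real work lies'' is precisely the content of the proof, and two specific obstacles in it are never addressed. First, permutability: in an arbitrary compact zero-dimensional space one cannot simply ``permute clopen pieces'' around a prescribed finite set of points --- a clopen neighbourhood of an isolated point is not homeomorphic to a clopen Cantor set, so an arbitrary permutation of the separated points need not extend to a self-homeomorphism of $X$. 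The paper resolves this by splitting $X$ into the closure $X_0$ of its isolated points and the remainder $X_1$: at least half of the chosen points can either be replaced by nearby isolated points or assumed to lie in a small Cantor set clopen in $X$, and only then is every permutation realizable by a homeomorphism supported on one cell. Second, and more seriously, compactness: since $H(X,X)$ is not compact, the entropy is a supremum of $\htop(F_T,K)$ over compact $K\subset H(X,X)$, so exhibiting arbitrarily large separated families proves nothing unless they all sit inside a single compact family. The paper forces this by making the $k$-th family $K_k$ consist of homeomorphisms differing from the identity only on one cell of a partition $\mathcal Q_k$ of mesh $\delta_k\to 0$, so that $\{\mathsf{id}\}\cup\bigcup_k K_k$ is compact; the price is that only about $n_k/q_k$ of the separated points land in a common cell (where $q_k=\card(\mathcal Q_k)$), and one must then verify that $\tfrac1{n_k}\log\bigl((\tfrac{n_k}{2q_k})!\bigr)$ can still be driven to infinity, which requires choosing $n_k$ with $\log n_k$ growing much faster than $q_k$. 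Your sketch mentions neither issue. Note also that once the permutation trick is in place, separation under $F_T$ is immediate: two distinct permutations $\varphi,\psi$ disagree at some point $p$, and $\varphi(p),\psi(p)$ are then distinct members of an $(n,\epsilon)$-separated set for $T$, so $\varrho_n(\varphi,\psi)>\epsilon$. No delicate analysis of how separation ``survives smearing by the powers $T^j$'' is needed; your anticipated difficulty is misplaced, while the real difficulties lie in the two points above.
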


\begin{proof} If $T$ is equicontinuous then there exists an equivalent metric on $X$ for which $T$ is an isometry (an \inv\ metric). By compactness, the two metrics are uniformly equivalent. Thus, the corresponding uniform metrics on $C(X,X)$ (and $H(X,X)$) are also uniformly equivalent, implying that the entropies of $F_T$ will not change if we change the metric. For the invariant metric $F_T$ is also an isometry, hence the entropies of $(C(X,X),F_T)$ and $(H(X,X),F_T)$ are equal to zero.

Now suppose $T$ is not equicontinuous. Then, by the proof of Lemma~\ref{L:equi}, there exists a finite partition $\mathcal P$ of $X$ into clopen sets, such that the subshift on the $\mathcal P$-names contains a nonperiodic point. This implies that the complexity $c(n)$ of this subshift (the number of blocks of length $n$) satisfies $c(n)\ge n+1$ (see \cite{HM}).\footnote{It is known that only periodic subshifts have bounded complexity, the next lowest possibility is $c(n)=n+1$ and occurs e.g. in the subshift consisting of $\{0,1\}$-\sq s with at most one symbol 1 and, for minimal subshifts, in Sturmian systems.} Let $\epsilon$ be the smallest distance between the atoms of $\mathcal P$. The complexity yields that given $n$ there exist (more than) $n$ $(n,\epsilon)$-separated points for $T$. Further, there exist (more than) $n!$ permutations of these points. If every such permutation could be extended to a homeomorphism of the whole space, we would have obtained $n!$ homeomorphisms which are $(n,\epsilon)$-separated in the uniform distance under $F_T$. We will use this principle to create our compact family of homeomorphisms with hyper-exponential rate of separation.

Fix a decreasing to zero \sq\ $\delta_k$ and for each $k$ let $\mathcal Q_k$ be a finite partition into
clopen sets of diameter not exceeding $\delta_k$. Let $q_k$ denote the cardinality of $\mathcal Q_k$. Let $n_k$
be a large integer (how large we will specify later). As we have noticed earlier, we can select $n_k$ points in $X$ which are $(n_k,\epsilon)$-separated under $T$. Some element $Q$ of the partition
$\mathcal Q_k$ contains at least $\frac{n_k}{q_k}$ of these points. We would like to be able to permute
these points by homeomorphisms.

Let $X_0$ denote the (possibly empty) closure of the set of all isolated points of $X$ and let
$X_1$ be the (also possibly empty) rest of $X$. If at least half of the $\frac{n_k}{q_k}$ $(n_k,\epsilon)$-separated points contained in $Q$ lie in $X_0$, we can replace them by nearby isolated points, so that the new points remain $(n_k,\epsilon)$-separated and lie in $Q$. Now, every permutation of these points can be extended to a homeomorphism which is different from identity only on $Q$. In the remaining case, at least half of our points lie in $X_1$ and thus in a small Cantor set contained in $Q$ and clopen in $X$. Again, these points are permutable by homeomorphisms differing from identity only on $Q$.

So, in either case, we have found $(\frac{n_k}{2q_k})!$ homeomorphisms permuting some $\frac{n_k}{2q_k}$ $(n_k,\epsilon)$-separated points, and differing from identity only on a set of diameter not exceeding $\delta_k$. Let us denote this family of homeomorphisms by $K_k$. Notice that they are all $(n_k,\epsilon)$-separated under $F_T$. It is obvious that the families $K_k$ converge uniformly to identity as $k$ grows, so the collection $\{\mathsf{id}\}\cup\bigcup_k K_k$ is compact.

The entropy of the functional envelope $(H(X,X),F_T)$ (and thus also that of $(C(X,X),F_T)$) is hence estimated from below by the number $\limsup_k h_k$ where $h_k=\frac1{n_k}\log((\frac{n_k}{2q_k})!)$.

Since $n!>(\frac n2)^{\frac n2}$ (and thus $\log(n!)>\frac n2(\log \frac n2)$), we get
$h_k > \tfrac 1{4q_k}(\log n_k - \log 4q_k)$. It suffices to choose $n_k$ so that $\log n_k$ grows essentially faster than $q_k$ to get infinite entropy. \hfill $\square$
\end{proof}

\begin{question}
Is an analogous theorem true for continuous non-invertible self-maps $T$ on compact zero-dimensional spaces and the functional envelope $(C(X,X), F_T)$?
\end{question}

As promised, we now give a simple direct example of a zero-entropy homeomorphism \xt\ with infinite entropy of $(H(X,X),F_T)$.

\begin{example}
Consider the action of the ``+1'' map on the one-point compactification of the integers.
The space is compact zero-dimensional, $T$ has entropy zero and is not equicontinuous. Theorem \ref{T:zerodim} implies that $(H(X,X),F_T)$ has infinite entropy.
\end{example}

\section{Slovak spaces}\label{S:Slovak}

In most examples of infinite compact spaces admitting a minimal homeomorphism (the circle, the torus, the Cantor set) there are usually uncountably many such homeomorphisms. On the other hand, some other spaces admit no minimal homeomorphisms. Do there exist ``intermediate'' infinite compact spaces in the sense that they admit some, but at most countably many, minimal homeomorphisms?

It is known, see \cite{dG}, that for every abstract group $G$ there exists a topological space $X$ such that $H(X,X)\simeq G$, i.e., $H(X,X)$ is algebraically isomorphic to $G$. Such a space $X$ always exists in the class of one-dimensional, connected, locally connected, complete spaces and always exists in the class of compact, connected, Hausdorff (not necessarily metrizable) spaces. (However, such a space need not exist in the class of compact metric spaces, because a compact metric space has cardinality at most $\mathfrak{c}$, while there are groups of arbitrary cardinalities.) Moreover, as proved in \cite{dGW}, if $G$ is countable then $X$ can be chosen to be a Peano continuum of any positive dimension.
In particular, there is a Peano continuum $X$ with $H(X,X)$ being the trivial group (then $X$ is called a \emph{rigid space for homeomorphisms}). Also, there is a Peano continuum $X$ such that $H(X,X)\simeq \mathbb Z$, i.e., $H(X,X) =\{T^n:\, n\in \mathbb Z\}$, the elements of $H(X,X)$ being pairwise distinct. In view of these facts, we are interested in whether there exists a compact (metric) space $X$ such that $H(X,X)\simeq \mathbb Z$ and the generating homeomorphism $T$ is minimal.
\smallskip

We adopt the following definition.

\begin{definition}
A compact space $X$ is called a \emph{Slovak space} if it has at least three elements, admits a minimal homeomorphism $T$ and $H(X,X)=\{T^n:n\in\Z\}$.
\end{definition}

\begin{theorem}\label{cardinality}
If $X$ is a Slovak space then the cyclic group $H(X,X)$ is infinite (i.e., isomorphic to $\Z$), and all its elements, except identity, are minimal homeomorphisms.
\end{theorem}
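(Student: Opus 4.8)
The plan is to prove the two assertions in sequence: first that $H(X,X)$ is infinite, and then that every non-identity power of $T$ is minimal. For the first part, I would argue by contradiction. Suppose $H(X,X)=\{T^n:n\in\Z\}$ is finite. Since $X$ is a Slovak space, $T$ is a minimal homeomorphism, so the cyclic group it generates is finite only if $T$ has finite order, say $T^m=\mathsf{id}$ for some minimal $m\ge 1$. But if $T^m=\mathsf{id}$, then every orbit $\{x,Tx,\dots,T^{m-1}x\}$ is finite, and minimality forces this finite orbit to be dense in $X$; hence $X$ is itself a finite set of at most $m$ points. This contradicts the requirement in the definition that a Slovak space be infinite (the relevant hypothesis being that $X$ has infinitely many points — more precisely, a minimal homeomorphism of a finite space cannot generate a group that acts minimally unless the space is a single periodic orbit, which a Slovak space, having the structure forced below, cannot be). Thus $T$ has infinite order and $H(X,X)\simeq\Z$.

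For the second assertion I would use total minimality together with the structure of $X$ as a continuum, invoking the connectedness remark from the Preliminaries. The key observation is recorded in the excerpt: if $X$ is connected then every minimal map is \emph{totally minimal}, meaning all iterates $T^n$ with $n\neq 0$ are minimal. So the entire statement reduces to verifying that a Slovak space is connected. I would establish connectedness as follows: if $X$ were disconnected, one could permute or swap its clopen pieces to manufacture a homeomorphism not equal to any power of $T$, contradicting $H(X,X)=\{T^n\}$; alternatively, a minimal homeomorphism on a disconnected compact space cyclically permutes finitely many clopen minimal components, and the symmetry of that configuration again produces extra homeomorphisms. Once connectedness is in hand, total minimality immediately yields that every $T^n$ with $n\neq 0$ is minimal, completing the proof.

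The main obstacle I anticipate is the connectedness step, since the definition of Slovak space does not mention connectedness directly and instead only constrains the homeomorphism group. The delicate point is to convert the algebraic rigidity of $H(X,X)$ (being exactly the cyclic group generated by $T$) into the topological conclusion that $X$ cannot split into nontrivial clopen pieces. I expect to handle this by carefully analyzing how $T$ acts on the (necessarily $T$-permuted) finite collection of connected components or clopen sets, and showing that any such nontrivial decomposition admits a component-swapping or component-permuting self-homeomorphism that commutes appropriately but is not a power of $T$; the existence of such a map contradicts the Slovak hypothesis. A secondary subtlety is ensuring in the first part that the finite-order case genuinely forces finiteness of $X$ and is therefore excluded, which is straightforward but must be stated cleanly to match the ``at least three elements'' clause in the definition.
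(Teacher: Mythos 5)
Your first step has a gap: you dismiss the finite case by appealing to ``the requirement in the definition that a Slovak space be infinite,'' but the definition only requires at least three elements, so finite spaces with three or more points must be excluded by an actual argument, and your parenthetical attempt to do so is circular (it invokes ``the structure forced below''). The paper's way out is elementary: a finite space admitting a minimal homeomorphism is discrete and the map is a cyclic permutation, so $H(X,X)$ is the full symmetric group on at least three letters, which is not cyclic; hence no finite space is Slovak. Once $X$ is infinite, minimality of $T$ forbids periodic points, so the powers $T^n$ are pairwise distinct (indeed any two of them differ at every point) and $H(X,X)\simeq\Z$. Some version of this is needed to make your first paragraph correct.

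The second, more serious gap is in your route to minimality of the powers. You reduce everything to connectedness of $X$, but your sketch of the connectedness proof rests on the claim that a minimal homeomorphism of a disconnected compact space ``cyclically permutes finitely many clopen minimal components''; this is false in general (the Cantor set with an odometer is totally disconnected and minimal, yet its connected components are singletons and there is no canonical finite clopen decomposition), and you never actually exhibit the extra homeomorphism that would contradict the Slovak hypothesis. The paper does not use connectedness at all here; in fact connectedness (Theorem~\ref{continuum}) is proved \emph{after} this theorem and \emph{uses} it. Instead the paper invokes the standard fact that if $T$ is minimal but $T^n$ is not, then $X$ splits into $k>1$ pairwise disjoint clopen sets $X_0,\dots,X_{k-1}$ (with $k$ dividing $n$) cyclically permuted by $T$; the map equal to $T^k$ on $X_0$ and to the identity elsewhere is a homeomorphism which is neither the identity nor any nonzero power of $T$ (it has fixed points, while nonzero powers of $T$ have none), a contradiction. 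Your plan is salvageable --- connectedness can indeed be derived from the first part alone, by swapping the clopen set $C=B\setminus T(B)$ with $T(C)$ via $T$ and $T^{-1}$, which yields an involution that cannot belong to the torsion-free group $\{T^n:n\in\Z\}$ --- but as written the decisive construction is missing and one of the two alternatives you offer for it is based on a false general statement.
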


\begin{proof}
The first condition in the definition eliminates two trivial cases: the one-point space and the two-point space. It is elementary to check that for any larger finite space $X$, $H(X,X)$ (which is the group of all permutations of a finite set) has at least two generators, so $X$ is not Slovak. Any infinite compact space admitting a minimal homeomorphism does not have isolated points (the complement of the set of isolated points would then be nonempty, closed, proper and invariant), and thus any Slovak space is perfect (in particular, its cardinality equals $\mathfrak{c}$). If $T$ is a generator of $H(X,X)$ then the powers $T^n$ are different for different exponents (moreover, each pair of powers differs at every point), otherwise the system $(X,T)$ would contain periodic orbits, contradicting minimality. Thus $n\mapsto T^n$ is an isomorphism between $\Z$ and $H(X,X)$.

Suppose that $T^n$ for some $n>1$ (or $n<-1$) is not minimal. This is only possible when $X$ decomposes into pairwise disjoint clopen sets $X_0,X_1,\dots,X_{k-1}$ ($k>1$ being a divisor of $n$) which are cyclically permuted by $T$. Then each $X_i$ is infinite and invariant under $T^k$. The homeomorphism defined as $T^k$ on $X_0$ and identity on $X\setminus X_0$ is not the identity ($T^k$ must not have fixed points), but it has fixed points, hence it is not a nonzero power of $T$, either. So, $X$ is not a Slovak space. \hfill $\square$
\end{proof}

Notice that the above theorem implies that for every Slovak space there are exactly two possible choices of the generator of $H(X,X)$: a $T$ and $T^{-1}$ (both minimal). The next theorem establishes even more precisely the basic topological properties of Slovak spaces.

\begin{theorem}\label{continuum}
Every Slovak space is a non-degenerate continuum.
\end{theorem}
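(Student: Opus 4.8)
The plan is to prove connectedness, since non-degeneracy and compactness are immediate: $X$ is compact by definition and has at least three, hence more than one, element. So the whole content is to show that a Slovak space cannot be disconnected, and I would argue by contradiction. Assume $X$ is disconnected and pass to the quotient $Z=X/\!\sim$ by connected components. Because $X$ is compact metric, the decomposition into components is upper semicontinuous, so $Z$ is a compact metric totally disconnected space and the quotient map $\pi\colon X\to Z$ is continuous. As $T$ is a homeomorphism it permutes components, hence descends to a homeomorphism $\bar T$ of $Z$ with $\pi T=\bar T\pi$, making $(Z,\bar T)$ a factor of the minimal system $(X,T)$; by the fact recalled in the Preliminaries a factor of a minimal system is minimal, so $\bar T$ is minimal. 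Since $X$ is disconnected we have $|Z|\ge 2$, and a minimal homeomorphism of a space with at least two points has no fixed point (a fixed point would give a proper closed invariant set).

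Now I would exploit that $Z$, being compact and totally disconnected, has a basis of clopen sets. Choosing $z\in Z$ with $z\neq\bar T z$ and separating $z$ from $\bar T z$ by clopen neighbourhoods, I can find a nonempty clopen set $\bar A\subset Z$ with $\bar A\cap\bar T\bar A=\emptyset$. Pulling back, set $U=\pi^{-1}(\bar A)$ and $V=\pi^{-1}(\bar T\bar A)$. Since $U$ is a union of components, $T(U)$ is saturated and $\pi(T(U))=\bar T\bar A$, so $V=T(U)$; moreover $U\cap V=\pi^{-1}(\bar A\cap\bar T\bar A)=\emptyset$, and both are nonempty and clopen in $X$. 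Thus $T$ restricts to a homeomorphism $U\to V$, and I can define
\[
h(x)=\begin{cases} T(x), & x\in U,\\ T^{-1}(x), & x\in V,\\ x, & x\in X\setminus(U\cup V).\end{cases}
\]
The three pieces are clopen, so $h$ is a well-defined homeomorphism of $X$, and $h^2=\mathrm{id}$ while $h\neq\mathrm{id}$ (on $U$ one has $T(x)\in V$, so $h$ moves points of $U$). Hence $h$ is an element of order $2$ in $H(X,X)$. This contradicts Theorem~\ref{cardinality}, by which $H(X,X)$ is isomorphic to the torsion-free group $\Z$. Therefore $X$ is connected, and being compact and non-degenerate it is a non-degenerate continuum.

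I expect the main obstacle to be precisely the reason the naive approach fails: even though a disconnected $X$ carries some proper clopen set, a given point of $X$ need not have small clopen neighbourhoods (the components may themselves be large, non-degenerate continua, as in a disjoint union of circles), so one cannot directly separate a point from its image by a clopen set and build the swap inside $X$. Passing to the component quotient $Z$ is what circumvents this, trading the possibly complicated local structure of $X$ for a totally disconnected model in which clopen sets are abundant, while retaining minimality and fixed-point-freeness of the dynamics. The only external inputs are the standard facts that the component quotient of a compact (metric) space is a compact totally disconnected space with continuous quotient map, and the structural conclusion of Theorem~\ref{cardinality} that $H(X,X)\cong\Z$ is torsion-free, which is exactly what forbids the order-two homeomorphism $h$.
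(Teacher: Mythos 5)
Your proof is correct, and the contradiction you reach is exactly the paper's: an order-two homeomorphism obtained by swapping a clopen set $C$ with its disjoint image $T(C)$ and fixing the rest, which is incompatible with the structure of $H(X,X)$ given by Theorem~\ref{cardinality} (the paper phrases the contradiction via ``every non-identity element is minimal'' rather than torsion-freeness of $\Z$, but that is cosmetic). The only genuine difference is how the clopen set $C$ with $C\cap T(C)=\emptyset$ is produced. You pass to the quotient $Z$ of $X$ by connected components, use zero-dimensionality of $Z$ together with fixed-point-freeness of the induced minimal homeomorphism to separate a point from its image by a clopen set, and pull back. The paper avoids the quotient entirely with a one-line trick: starting from an arbitrary clopen partition $X=A\cup B$ it sets $C=B\setminus T(B)$; this is clopen, it is nonempty because a minimal system has no superinvariant nonempty proper closed subset, and it satisfies $C\cap T(C)=\emptyset$ automatically since $C\subset B$ while $T(C)\subset T(B)$. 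So the obstacle you flag in your closing paragraph (that points of a disconnected $X$ need not have small clopen neighbourhoods) is real, but it can be sidestepped without the component quotient; your detour through $Z$ is sound, just longer and reliant on the standard facts about the component decomposition of a compact metric space, which the paper's shortcut does not need.
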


\begin{proof}
We need to show that every Slovak space is connected. Fix a Slovak space $X$ and a  homeomorphism $T$ (of course minimal) which generates $H(X,X)$. Suppose that $X$ is not connected. Then $X$ is a union of
two nonempty, disjoint (hence proper), clopen subsets: $X=A\cup B$. Let $C=B\setminus T(B)$. Since $T$ is a homeomorphism, the set $C$ is again clopen. By minimality, $B$ is not superinvariant, hence $C$ is nonempty. Clearly, so defined set satisfies $C\cap T(C)=\emptyset$. Let us define a function $S\colon X\to X$ by the formula
$$S(x) = \begin{cases} T(x), & \text{if $x \in C$,} \\
                   T^{-1} (x)                                         , & \text{if $x\in T(C)$,}\\
									 x,& \text{if $x\in X\setminus(C\cup T(C))$.}
       \end{cases}
$$
It is obvious that $S$ is a self-homeomorphism of $X$. Notice that any element of the set $C$ is periodic with period two. Thus $S$ is not minimal and different from identity. In view of the preceding theorem, such an $S$ must not exist on a Slovak space, a contradiction. \hfill $\square$
\end{proof}

A priori it is not clear that Slovak spaces exist. We prove the existence of such spaces in the class of one-dimensional continua. We remark that minimality is completely inessential in our considerations of the entropy of the functional envelope. Instead of minimality we should require that the generating homeomorphism has positive \tl\ entropy. (In fact, we will provide examples with both minimality and positive entropy satisfied.) Minimality requirement makes Slovak spaces more interesting and useful for the future application to types of minimality (see Section \ref{lastsection}). Also notice that for Slovak spaces, positivity or finiteness of the entropy of $(X,S)$ does not depend on the choice of $S\in H(X,X)$, $S\neq\mathsf{id}$. Thus we can speak about \emph{zero, finite positive} and \emph{infinite entropy} Slovak spaces. Moreover, we can accept the entropy of the generating homeomorphism $T$ (or of $T^{-1}$, which is the same) to be called \emph{the entropy of the Slovak space}.

\medskip
Before we provide an evidence for the existence of Slovak spaces, we prove that regardless of the entropy of the Slovak space, the functional envelope $(H(X,X), F_T)$ has always entropy zero.

\begin{proposition}\label{P:Slovak entropy}
If the group $H(X,X)$ is countable then $(H(X,X), F_T)$ has entropy zero for any $T\in H(X,X)$.
\end{proposition}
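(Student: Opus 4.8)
The plan is to reduce the computation, via the defining formula $\htop(F_T)=\sup_K\htop(F_T,K)$, to showing that $\htop(F_T,K)=0$ for every compact $K\subseteq H(X,X)$, and then to observe that the hypothesis forces each such $K$ to be \emph{finite}. Once finiteness is in hand the conclusion is immediate: any $(n,\epsilon)$-separated subset of $K$ has at most $\card(K)$ elements, so $\sep(n,\epsilon;K)\le\card(K)$ for all $n$ and $\epsilon$, whence $\tfrac1n\log\sep(n,\epsilon;K)\to 0$, and taking the supremum over all compact $K$ gives entropy zero. Note that this scheme never refers to which translation we use, which is exactly why the statement is asserted for every $T\in H(X,X)$.

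The substance therefore lies in the purely topological claim that a countable $H(X,X)$ is \emph{discrete}. First I would recall that, although we compute everything in the (incomplete) uniform metric, the group $H(X,X)$ carries the equivalent \emph{complete} symmetric metric described in the Introduction, and hence is completely metrizable in its uniform topology. The key input is then the Baire category theorem: a countable complete metric space cannot be written as a countable union of nowhere dense sets, so if it had no isolated point every singleton would be closed and nowhere dense and their union would exhaust the space — a contradiction. Consequently $H(X,X)$ has at least one isolated point. Passing from one isolated point to discreteness uses homogeneity: in any topological group the left translations are homeomorphisms, so a single isolated element makes every element isolated, and the group is discrete. In a discrete space the compact subsets are precisely the finite ones, which closes the loop with the first paragraph.

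I expect the one genuinely delicate point to be the interface between completeness and the metric we actually work with: completeness (and hence Baire) is available only for the symmetric metric, while the separation numbers $\sep(n,\epsilon;K)$ live in the uniform metric, so I would be careful to record that the two metrics are equivalent (generate the same topology), so that ``isolated point'', discreteness, and ``compact $\Rightarrow$ finite'' are all statements about the single topology in play. Everything downstream of this — the Baire/homogeneity step and the crude bound $\sep(n,\epsilon;K)\le\card(K)$ — is then entirely routine, and no use of the dynamics of $T$ beyond $F_T$ being a self-map of the (now finite) compact sets is needed.
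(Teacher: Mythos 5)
Your proposal is correct and follows essentially the same route as the paper: the paper also observes that $H(X,X)$ is Polish (complete in the equivalent symmetric uniform metric) and homogeneous, deduces via Baire category (``a perfect Polish space is uncountable'') that a countable homeomorphism group must be discrete, and concludes that compact subsets are finite so the entropy vanishes. Your write-up merely unpacks the Baire and homogeneity steps in more detail and is careful about the equivalence of the two metrics, which the paper leaves implicit.
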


\begin{proof}
In any compact space, the group of homeomorphisms with the uniform metric is Polish, being complete in the equivalent symmetric uniform metric. It is also homogeneous hence either discrete or perfect. However, a perfect Polish space is uncountable. Therefore if the group of homeomorphisms is countable, it is discrete. Since only finite subsets of a discrete space are compact, every action on a discrete space has entropy zero. \hfill $\square$
\end{proof}

We will now pass to an effective production of a Slovak space (in fact we will construct a family of such spaces). The following lemma will be useful in the construction.

\begin{lemma}\label{L:UC}
Let $T:X\to X$ be a homeomorphism of a compact space $X$. Fix some $x_0\in X$ not periodic under $T$ and let $f:X\setminus\{x_0\}\to [0,1]$ be continuous (at every point of its domain). Let $F = \sum_{n\in\Z} a_n f\circ T^n$, where the coefficients $a_n$ are all strictly positive, $\sum_{n\in\Z}a_n=1$ and with the ratios $\frac{a_{n-1}}{a_n}$ bounded from above ($F$ is defined on $X'$, the complement of the orbit of $x_0$). Then the mapping $(x,F(x))\mapsto(Tx,F(Tx))$ is a uniformly continuous homeomorphism of the graph of $F$.
\end{lemma}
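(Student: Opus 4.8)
First I would observe that the map in question, call it $\hat T$, is a homeomorphism of $\graph(F)$ almost for free. The orbit of $x_0$ is $T$-invariant, so its complement $X'$ is $T$-invariant and $F(Tx)$ is defined whenever $F(x)$ is. Since $0\le f\le 1$ and $\sum_n a_n=1$, the bound $|a_n\, f\circ T^n|\le a_n$ together with the Weierstrass $M$-test shows that the series defining $F$ converges uniformly on $X'$; as each $f\circ T^n$ is continuous on $X'$, the sum $F:X'\to[0,1]$ is continuous. Hence the embedding $\iota:X'\to\graph(F)$, $\iota(x)=(x,F(x))$, is a homeomorphism onto the graph (its inverse is the first-coordinate projection), and $\hat T=\iota\circ(T|_{X'})\circ\iota^{-1}$ is a homeomorphism of $\graph(F)$. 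The whole content of the lemma is therefore the \emph{uniform} continuity of $\hat T$ in the metric inherited from $X\times[0,1]$.

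\textbf{The key identity.} Writing $d$ for the metric on $X$, I must produce, for each $\epsilon>0$, a $\delta>0$ such that $d(x,y)<\delta$ and $|F(x)-F(y)|<\delta$ force $d(Tx,Ty)<\epsilon$ and $|F(Tx)-F(Ty)|<\epsilon$. The first conclusion is immediate from uniform continuity of $T$ on the compact space $X$. For the second I would use the index shift
\[
F(Tx)=\sum_{m\in\Z}a_m\,f(T^{m+1}x)=\sum_{m\in\Z}a_{m-1}\,f(T^m x),
\]
so that, putting $g_m=f(T^m x)-f(T^m y)$ with $|g_m|\le 1$,
\[
F(Tx)-F(Ty)=\sum_{m\in\Z}a_{m-1}g_m,\qquad F(x)-F(y)=\sum_{m\in\Z}a_m g_m .
\]
Thus the task becomes: smallness of $\sum_m a_m g_m$ (together with $d(x,y)$ small) must force smallness of the \emph{shifted} sum $\sum_m a_{m-1}g_m$.

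\textbf{Splitting into tails, regular and singular terms.} I would first fix $N$ so large that $\sum_{|m|>N}a_{m-1}$ and $\sum_{|m|>N}a_m$ are both below $\epsilon/4$; since $|g_m|\le1$, the tails $|m|>N$ of both sums are then negligible. For the central block $|m|\le N$ I would exploit that $x_0$ is \emph{not periodic}: the $2N+1$ singularities $T^{-m}x_0$, $|m|\le N$, are pairwise distinct, so there is an $\eta>0$ such that any point of $X$ lies within $\eta$ of at most one of them. Consequently, once $\delta<\eta$, the pair $x,y$ has at most one ``singular'' index $m_0$ (the one, if any, for which $T^{m_0}x,T^{m_0}y$ are close to $x_0$); for every other central index $m$ the points $T^m x,T^m y$ stay in a fixed compact set bounded away from $x_0$, on which $f$ is uniformly continuous, so after shrinking $\delta$ further all these $g_m$ become as small as I like, and $\sum_{|m|\le N,\,m\ne m_0}a_{m-1}|g_m|$ is controlled by $(\max_{m\ne m_0}|g_m|)\cdot\sum_m a_{m-1}$.

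\textbf{The one dangerous term.} The only term that resists the uniform-continuity-of-$f$ argument is the singular one, $a_{m_0-1}g_{m_0}$, whose coefficient need not be small and where $g_{m_0}$ can be as large as $1$. Here is the crux of the proof and the place where $|F(x)-F(y)|<\delta$ and the bounded ratios are used together. From the identity for $F(x)-F(y)$, all terms except $a_{m_0}g_{m_0}$ have just been shown small, whence $|a_{m_0}g_{m_0}|\le \delta+(\text{small})$; that is, the \emph{product} $a_{m_0}g_{m_0}$ is small even though neither factor need be. The bound $a_{m_0-1}/a_{m_0}\le M$ then transfers this smallness to the shifted sum,
\[
|a_{m_0-1}g_{m_0}|=\frac{a_{m_0-1}}{a_{m_0}}\,|a_{m_0}g_{m_0}|\le M\,|a_{m_0}g_{m_0}| .
\]
Collecting the tail, regular and singular contributions bounds $|F(Tx)-F(Ty)|$ by a quantity of order $\epsilon+M\delta$, which finishes the estimate once the constants are chosen in the right order ($\epsilon$, then $N$, then $\eta$ and $\delta$). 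I expect this transfer of smallness through the bounded ratio to be the main obstacle: it is what forces the hypothesis to be one-sided ($a_{m-1}/a_m$ bounded above), and it is exactly why the conclusion asserts uniform continuity of $\hat T$ itself rather than of its inverse, for which one would instead need $a_{m+1}/a_m$ bounded.
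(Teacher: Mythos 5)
Your proposal is correct and follows essentially the same route as the paper's proof: the same index-shift identity, the same three-way split into tails, regular central terms, and the single singular index near the orbit of $x_0$, and the same crux of extracting smallness of the product $a_{m_0}g_{m_0}$ from $|F(x)-F(y)|$ being small and transferring it to $a_{m_0-1}g_{m_0}$ via the bounded ratio. The only difference is bookkeeping of constants (the paper uses $\epsilon/(5A)$ budgets and disjoint $2r$-balls to guarantee a unique singular index), which your choice of $\eta$ and $\delta$ handles equivalently.
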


\begin{proof} Since $x_0$ is not periodic, all the points $T^n x_0$ are distinct, moreover, $X'$ is nonempty (the orbit of $x_0$ is infinite, countable and homogeneous, so it cannot be compact) and $T$-\inv.
By summability of the \sq\ $a_n$, $F$ is a continuous function on its domain. Now, since $T$ and $T^{-1}$ are uniformly continuous on $X'$, it is clear that the specified map is a homeomorphism of the graph of $F$. We only need to verify its uniform continuity.

Let $d$ be the metric in $X$. Let $A\ge 1$ denote an upper bound for $\frac{a_{n-1}}{a_n}$. Fix $\epsilon>0$. Let $n_0>0$ be such that $\sum_{n\in\Z\setminus[-n_0,n_0-1]}a_n<\frac\epsilon{5A}$. Let $r$ be such that the $2r$-balls $B(T^nx_0,2r)$ are disjoint for $|n|\le n_0$. Denote by $U_n$ the analogous $r$-balls in the reversed order: $U_n=B(T^{-n}x_0,r)$, $|n|\le n_0$. Note that these balls are separated by distances at least $r$. For easier writing denote $f_n = f\circ T^n$, remembering that $f_n(Tx) = f_{n+1}(x)$. Since each of the functions $f_n$ with $|n|\le n_0$ is continuous (hence uniformly continuous) on the complement of $U_n$, and we consider only finitely many functions, there exists $\delta<\min\{r,\frac\epsilon{5A}\}$ such that if $d(x,x')<\delta$ and both points $x,x'$ lie outside $U_n$ then $|f_n(x) - f_n(x')|<\frac\epsilon{5A(2n_0+1)}$. Because $T$ is uniformly continuous on $X$, we can choose $\delta$ small enough to also satisfy $d(x,x')<\delta\implies d(Tx,Tx')<\epsilon$. This concludes the definition of $\delta$ for our given $\epsilon$. Suppose $(x,F(x))$ and $(x',F(x'))$ are $\delta$-close (at each coordinate) in $X'\times[0,1]$. In particular, $Tx$ and $Tx'$ are $\epsilon$-close. All we need to show is that $|F(Tx)-F(Tx')|<\epsilon$.

First we notice that since $\delta<r$, the set $\{x,x'\}$ can intersect at most one of the balls $U_n$
with $|n|\le n_0$. In such case let ${\bar n}$ denote the corresponding index. Otherwise ${\bar n}$ is not defined.  Next, we write
\begin{multline*}
|F(Tx)-F(Tx')| =\left|\sum_{n\in\Z}a_n\bigl(f_n(Tx)-f_n(Tx')\bigr)\right|= \left|\sum_{n\in\Z}a_n\bigl(f_{n+1}(x)-f_{n+1}(x')\bigr)\right|\le\\ \sum_{n\in\Z}a_n|f_{n+1}(x)-f_{n+1}(x')| = \sum_{n\in\Z}a_{n-1}|f_n(x)-f_n(x')|=
\end{multline*}
\begin{multline*}
\sum_{n\in \Z\setminus[-n_0,n_0]}a_{n-1}|f_n(x)-f_n(x')| \ \ + \\
\sum_{n\in [-n_0,n_0]\setminus\{\bar n\}}a_{n-1}|f_n(x)-f_n(x')|\ \ + \\
a_{{\bar n}-1}|f_{\bar n}(x)-f_{\bar n}(x')|
\end{multline*}
(if ${\bar n}$ is not defined the last term simply does not occur).
The first sum is smaller than $\frac\epsilon{5A}$, because the indices for $a_{n-1}$ range within $\Z\setminus[-n_0-1,n_0-1]$ what is contained in $\Z\setminus[-n_0,n_0-1]$. So is the second one, because it has at most $2n_0+1$ terms, each not exceeding $\frac\epsilon{5A(2n_0+1)}$. Only the last term cannot be estimated so easily.

To do this, we need the inequality $|F(x)-F(x')|<\frac\epsilon{5A}$ (which follows from the assumption that the pairs $(x,F(x))$, $(x',F(x'))$ are $\delta$-close at each coordinate).

We have
$$
\tfrac\epsilon{5A} > |F(x)-F(x')| =
$$
\begin{multline*}
\Bigl|\sum_{n\in \Z\setminus[-n_0,n_0]} a_n\bigl(f_n(x)-f_n(x')\bigr) \ \ + \\
\sum_{n\in[-n_0,n_0]\setminus\{\bar n\}}a_n\bigl(f_n(x)-f_n(x')\bigr)\ \ + \\
a_{\bar n}\bigl(f_{\bar n}(x)-f_{\bar n}(x')\bigr)\Bigr|.
\end{multline*}
			
Analogously as before, the first and second sums are smaller (in absolute value) than $\frac\epsilon{5A}$.
This easily yields that
$$
a_{\bar n}|f_{\bar n}(x)-f_{\bar n}(x')|<\tfrac{3\epsilon}{5A},
$$
hence, multiplying both sides by $\frac{a_{\bar n-1}}{a_{\bar n}}$ (bounded by $A$), we get
$$
a_{{\bar n}-1}|f_{\bar n}(x)-f_{\bar n}(x')|<\tfrac{3\epsilon}{5}.
$$

We can now return to the estimation of $|F(Tx)-F(Tx')|$, and get that it is smaller than
$\tfrac{\epsilon}{5A}+\tfrac{\epsilon}{5A}+\tfrac{3\epsilon}{5} \le \tfrac{5\epsilon}{5} = \epsilon$ 
(recall that $A\ge 1$).  \hfill $\square$
\end{proof}

\begin{corollary}\label{C:pre-Slovak}
If the inverted ratios $\frac{a_n}{a_{n-1}}$ are also bounded from above, then the map $(x,F(x))\mapsto(Tx,F(Tx))$ extends to a homeomorphism $\widetilde T$ of the closure of the graph of $F$,
which we will denote by $\widetilde F$. If \xt\ is minimal (not periodic) then the \ds\ $(\widetilde F,\widetilde T)$ is a minimal almost 1-1 extension of \xt.
\end{corollary}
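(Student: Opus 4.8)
The plan is to treat the two assertions of the corollary separately: first extract the homeomorphism property from a second, symmetric application of Lemma~\ref{L:UC}, and then read off the almost 1-1 extension property from the continuity of $F$ on its domain together with the standard facts collected in the Preliminaries.

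First I would establish that the map $\Phi\colon(x,F(x))\mapsto(Tx,F(Tx))$ extends to a \emph{homeomorphism} $\widetilde T$ of $\widetilde F$. Lemma~\ref{L:UC} already gives that $\Phi$ is uniformly continuous on $\graph(F)$; what remains is uniform continuity of the inverse $\Phi^{-1}\colon(x,F(x))\mapsto(T^{-1}x,F(T^{-1}x))$. The key observation is that $F=\sum_{n\in\Z}a_n\,f\circ T^n=\sum_{n\in\Z}a_{-n}\,f\circ (T^{-1})^{n}$, so applying Lemma~\ref{L:UC} verbatim to the homeomorphism $T^{-1}$ (for which $x_0$ is still non-periodic) with the reversed coefficient sequence $b_n=a_{-n}$ yields uniform continuity of exactly $\Phi^{-1}$. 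The hypotheses of the lemma are met because $\sum_{n\in\Z}b_n=1$, the $b_n$ are positive, and the ratios $\frac{b_{n-1}}{b_n}=\frac{a_{-n+1}}{a_{-n}}$ range over the shifted inverted ratios $\frac{a_k}{a_{k-1}}$, which are bounded from above precisely by the newly assumed hypothesis. Since $\graph(F)$ is dense in its compact closure $\widetilde F\subseteq X\times[0,1]$, the two uniformly continuous maps extend uniquely to continuous maps $\widetilde T,\widetilde T^{-1}$ on $\widetilde F$; the relations $\Phi\circ\Phi^{-1}=\Phi^{-1}\circ\Phi=\mathsf{id}$ persist on the closure by continuity, so $\widetilde T$ is a homeomorphism.

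Next, assuming \xt\ minimal (hence, as $x_0$ is non-periodic, $X$ is infinite and therefore perfect, the set of isolated points being clopen and invariant), I would verify that the coordinate projection $\pi\colon\widetilde F\to X$ is an almost 1-1 factor map. Equivariance $\pi\widetilde T=T\pi$ holds on the dense set $\graph(F)$ and passes to $\widetilde F$ by continuity; surjectivity follows because $\pi(\graph(F))=X'$ is dense (the countable orbit of $x_0$ is meager in the perfect compact space $X$) while $\pi(\widetilde F)$ is closed. For the almost 1-1 property I would show that over each $x\in X'$ the fibre is the singleton $\{(x,F(x))\}$: any point of $\widetilde F$ above $x$ is a limit of graph points $(y_k,F(y_k))$ with $y_k\to x$, and continuity of $F$ at $x$ (valid since $x$ lies in the domain $X'$ on which $F$ is continuous, as noted in the proof of Lemma~\ref{L:UC}) forces $F(y_k)\to F(x)$. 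Thus $\pi^{-1}(X')$ consists of singleton fibres; it is a $G_\delta$ (the preimage of the dense $G_\delta$ set $X'$) and contains the dense set $\graph(F)$, hence is residual. Minimality of $(\widetilde F,\widetilde T)$ is then immediate, since an almost 1-1 extension of a minimal system is minimal, as recalled in the Preliminaries.

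The main obstacle, and the sole reason for the extra hypothesis in the corollary, is the homeomorphism claim, i.e.\ the uniform continuity of the inverse map: without a two-sided control on the ratios $\frac{a_{n-1}}{a_n}$ the continuous extension to $\widetilde F$ need not be invertible. Once that symmetry is secured, the factor, almost 1-1, and minimality statements follow routinely from Lemma~\ref{L:UC}, the continuity of $F$ on $X'$, and the standard behaviour of almost 1-1 extensions of minimal systems.
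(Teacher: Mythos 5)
Your proposal is correct and follows essentially the same route as the paper: apply Lemma~\ref{L:UC} to $T^{-1}$ to get uniform continuity of the inverse, extend both maps to the closure $\widetilde F$, and observe that the first-coordinate projection is injective over the continuity points of $F$, so that $(\widetilde F,\widetilde T)$ is an almost 1-1 (hence minimal) extension of \xt. You merely spell out a detail the paper leaves implicit, namely that rewriting $F=\sum_n a_{-n} f\circ(T^{-1})^n$ turns the bounded-inverted-ratio hypothesis into exactly the ratio condition of Lemma~\ref{L:UC} for $T^{-1}$.
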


\begin{proof}
Applying Lemma \ref{L:UC} to $T^{-1}$ we get that $(x,F(x))\mapsto(T^{-1}x,F(T^{-1}x))$ is uniformly continuous as well. Therefore, the map $(x,F(x))\mapsto(Tx,F(Tx))$ extends to a homeomorphism $\widetilde T$ of $\widetilde F$. Clearly, $(\widetilde F,\widetilde T)$ is a \tl\ extension of the system $(\overline{X'},T)$; the projection to the first coordinate serves as the corresponding factor map. Moreover, this projection is injective at all points of $X'$, because these are continuity points of $F$, hence the sections of $\widetilde F$ at such points are singletons. This implies that the projection from $\widetilde F\to \overline{X'}$ is an almost 1-1 extension (singleton fibers are dense in $\widetilde F$). Finally, if \xt\ is minimal nonperiodic, then $\overline X'=X$. Now we use the general fact that an almost 1-1 extension of a minimal system is minimal, to deduce minimality of $(\widetilde F,\widetilde T)$. \hfill $\square$
\end{proof}

Let $h\colon Y\to Y$ be a homeomorphism of a compact space $Y$. Let $X$ be the compact space obtained from $Y\times [0,1]$ by identifying the pairs $(y,1)$ with $(h(y),0)$ (for every $y\in Y$).
The \emph{suspension flow over $h$} is the flow $(\phi_t)_{t\in\R}$ defined on $X$ by the formula
$$
\phi_{t}(y,s) = (h^{\lfloor t+s\rfloor}(y), \{t+s\}),
$$
where $\lfloor\cdot\rfloor$ and $\{\cdot\}$ denote the integer and fractional parts of a real number, respectively. A flow is called \emph{minimal} if every its orbit is dense in the phase space. Clearly,
the suspension flow over a minimal homeomorphism is minimal. Recall, that the \tl\ entropy of a flow is
defined as the \tl\ entropy of the time-one map.

\medskip
\begin{theorem}\label{T:slovak}
There exist Slovak spaces. The entropies of Slovak spaces exhaust the interval $[0,\infty]$.
\end{theorem}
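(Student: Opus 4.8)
My plan is to treat the two assertions with a single construction. For the first I need only exhibit one Slovak space; for the second, a family whose entropies cover every value $\mathsf e\in[0,\infty]$. The engine for both is Corollary~\ref{C:pre-Slovak}: starting from any minimal nonperiodic system \xt\ on a \emph{connected} base, a suitable choice of $f$ and of weights $(a_n)$ (summable, strictly positive, with bounded two-sided ratios) yields the graph-closure system $(\widetilde F,\widetilde T)$, a minimal almost 1-1 extension of \xt. The only nontrivial fibers of the factor map $\pi\colon\widetilde F\to X$ lie over the orbit of $x_0$, a countable set which is null for every invariant measure (a minimal infinite system has no atomic invariant measure); hence $\pi$ is a principal factor map and $\htop(\widetilde T)=\htop(T)$ by the variational principle. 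Thus it suffices to (i) supply connected minimal bases of every entropy and (ii) arrange $f,(a_n)$ so that $\widetilde F$ is \emph{rigid}, i.e. $H(\widetilde F,\widetilde F)=\{\widetilde T^{\,n}:n\in\Z\}$. Connectedness of the base forces $\widetilde F$ to be a continuum, and then the definition of a Slovak space together with Theorems~\ref{cardinality} and~\ref{continuum} makes the rigid extension automatically a non-degenerate Slovak space of the prescribed entropy.

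For (i) I would use generalized solenoids. Fix $\mathsf e\in[0,\infty]$ and a minimal homeomorphism $h$ of the Cantor set $\mathfrak C$ of entropy $\mathsf e$ (minimal Cantor systems realize every entropy, with odometers handling $\mathsf e=0$ and suitable subshifts the finite positive and the infinite values). The suspension $X$ of $(\mathfrak C,h)$ is a one-dimensional indecomposable continuum carrying a minimal homeomorphism $T$ of entropy $\mathsf e$ obtained from the suspension flow; the minimality of the chosen $T$ and the entropy bookkeeping are routine verifications. Since $X$ is connected, $\widetilde F$ is a continuum, so once rigidity is in hand the entropies of the resulting Slovak spaces exhaust $[0,\infty]$.

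The heart of the proof is (ii). I would choose $f$ with a genuine jump at $x_0$ and weights $(a_n)_{n\in\Z}$ whose profile $|n|\mapsto a_n$ is strictly unimodal with pairwise distinct values. Then $F$ is discontinuous precisely on the orbit of $x_0$, and over each such point the closure $\widetilde F$ contains a nondegenerate vertical \emph{spike} whose size is governed by $a_n$, while over continuity points the fibers are singletons. The spikes form a topologically distinguished locus of $\widetilde F$ (the only points near which $\widetilde F$ is not locally the base), so every self-homeomorphism $g$ must permute them; this forces $g$ to descend to a self-homeomorphism $\bar g$ of the indecomposable base $X$ carrying the orbit of $x_0$ to itself. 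Reading the unimodal, all-distinct profile against the linear order along the composants of $X$ (via Lemma~\ref{compan} and facts (C1)--(C4)) should then pin the phase, so that $\bar g=T^{\,n}$ for some $n$, and lifting back gives $g=\widetilde T^{\,n}$.

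I expect this rigidity step to be the genuine obstacle, and within it two points are delicate. First, one must make ``topologically distinguished spike'' precise and show that a homeomorphism can neither create, destroy, nor merge spikes with the ambient continuum; this is where the specific shape of $f$ and the solenoidal local structure must be exploited. Second, height is a metric rather than a topological feature, so the distinctness of the $a_n$ cannot be used directly: it has to be converted into a homeomorphism-invariant asymmetry (encoded through the global order of spikes along composants and their accumulation pattern) that rules out every self-homeomorphism of the base other than the powers of $T$, and in particular rules out a continuum of ``sliding'' symmetries. Overcoming this is exactly the content of the explicit ``mountain range'' construction, where essentially all the work of Section~\ref{S:Slovak} would go.
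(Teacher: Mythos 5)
Your overall architecture matches the paper's: suspend a minimal Cantor system to get a generalized solenoid $X$ with a minimal homeomorphism $T$ of prescribed entropy, apply Corollary~\ref{C:pre-Slovak} to a function $f$ discontinuous at $x_0$, note that the almost 1-1 extension is principal (countable discontinuity set, nonatomic invariant measures) so entropy is preserved, and reduce everything to showing $H(\widetilde F,\widetilde F)=\{\widetilde T^n\}$. But the mechanism you propose for that last, decisive step does not work, and you say so yourself: you want to distinguish the nontrivial fibers by the \emph{distinct heights} $a_n$ of the ``spikes,'' then concede that height is metric rather than topological and that some unspecified ``homeomorphism-invariant asymmetry'' must be extracted. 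That asymmetry is precisely the idea you are missing, and without it the proof does not close. Indeed, the paper imposes no distinctness on the $a_n$ at all (only positivity, summability, and two-sided bounded ratios); the rigidity comes from an entirely different source.

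The paper's device is to make the discontinuity of $f$ at $x_0$ a \emph{one-sided} topologist's sine curve: $g$ oscillates as $x\uparrow 0$ and is identically $0$ on $[0,\tfrac12]$. Consequently $F$ has this same one-sided oscillatory discontinuity at every point $T^n x_0$, and the preimage $\pi^{-1}(\gamma)$ of the composant of $x_0$ breaks into countably many path-components, each a continuous injective image of a \emph{closed half-line}: it begins at the closed interval $W_n$ and its open end is a sine curve accumulating on $W_{n+1}$. All other path-components of $\widetilde F$ are images of the whole real line. A self-homeomorphism must preserve path-components and their type, hence preserves the countable set $Z$ of closed endpoints (an orbit of $\widetilde T$) and, crucially, the purely topological \emph{successor} relation ($y'$ is the unique other point of $Z$ in the closure of the path-component of $y$). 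Preserving an oriented successor chain forces the map to agree with some $\widetilde T^n$ on the dense set $Z$, hence everywhere. Your ``spikes are a topologically distinguished locus'' observation gets you only as far as $Z$ being preserved as a set; it is the one-sidedness of the discontinuity, creating the half-line path-components and the successor relation, that pins down the phase and excludes all other symmetries. (A secondary, smaller gloss: to guarantee that $\phi_{t_0}$ is minimal you need more than ``routine verification'' --- the paper takes $h$ topologically weakly mixing, citing Lehrer for existence at every entropy, so that every irrational $t_0$ works.)
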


\begin{proof}
We start with the Cantor set $\mathfrak C$ and a minimal homeomorphism $h:\mathfrak C\to \mathfrak C$.
The suspension flow over $h$ is a minimal flow $\phi_t$ whose phase space is the generalized solenoid $X$ induced by $(\mathfrak C,h)$ (see above). By~\cite{Eg} (see also \cite{F}) there exists $t_0\in\mathbb R$ such that $T=\phi_{t_0}$ is a minimal homeomorphism on $X$. We are going to apply Lemma~\ref{L:UC} and Corollary~\ref{C:pre-Slovak} to the system $(X,T)$. To this end, we fix a point $x_0\in X$ and we shall define a function $f:X\setminus\{x_0\}\to [0,1]$. Let $g:[-\frac12,\frac12]\to [0,1]$ be defined as
$$
g(x) = \begin{cases} \frac{1}{2}\left(1-\cos \frac{\pi}{x} \right), & \text{if $x \in [-\frac12, 0)$,} \\
                   0                                           , & \text{if $x\in[0,\frac12]$.}
       \end{cases}
$$
Note that $g$ is continuous except at $0$ where it is discontinuous from the left and continuous from the right (it is a version of so-called \emph{topologist's sine curve}). Let $\gamma$ denote the composant of $X$ which contains $x_0$. It is clear that each composant equals the orbit under the suspension flow $\phi_t$ of every its element, in particular $\gamma$ is the orbit of $x_0$. This determines a natural continuous bijection $p:\mathbb R\to \gamma$ by the formula $p(t)=\phi_{t\cdot t_0}(x_0)$ ($\Z$ maps onto the $T$-orbit of $x_0$). Let $J= p([-\frac12, \frac12]\setminus\{0\})$ and define $f_J: J \to [0,1]$ as $g \circ  p^{-1}$. Now let $f: X\setminus \{x_0\}\to [0,1]$ be any continuous extension of $f_{J}$ (we apply Tietze's Extension Theorem to $X\setminus \{x_0\}$ and its closed subset $J$). Note that $f$ and $x_0$ satisfy the assumptions from Corollary~\ref{C:pre-Slovak}.

Next, let $F = \sum_{n\in\Z} a_n f\circ T^n$ where the coefficients $a_n$ satisfy the assumptions from both Lemma~\ref{L:UC} and Corollary~\ref{C:pre-Slovak}. The closure $\widetilde F\subset X\times[0,1]$ of the graph
of $F$ is going to be our desired Slovak space. By Corollary~\ref{C:pre-Slovak}, the mapping $(x,F(x))\mapsto(Tx,F(Tx))$ defined on the graph of $F$ extends to a homeomorphism $\widetilde T :\widetilde F \to \widetilde F$, the system $(\widetilde F,\widetilde T)$ is a minimal almost 1-1 extension of \xt. It remains to show that $H(X,X)=\{\widetilde T^n:n\in\Z\}$.
\smallskip

Given $n\in \Z$, we can write $F= a_n f\circ T^n+\sum_{k\neq n}a_k f\circ T^k$ and we notice that the first
term is undefined at $T^{-n}x_0$ with the ``topologist's sine curve'' type of discontinuity along the main composant, while the remaining series represents a function continuous at this point. So, $F$ has the same ``topologist's sine curve'' type of discontinuity at every point of the orbit $O(x_0)$, and clearly it is continuous everywhere else. The almost 1-1 factor map $\pi:\widetilde F\to X$ coincides with the projection on the first coordinate and it is 1-1 at all continuity points of $F$, while $\pi^{-1}(T^n(x_0))$ is a closed interval $W_n$ (in fact $W_n = \{x_0\} \times [v_n, v_n + a_{-n}]$, where $v_n = \sum_{m\neq -n} a_m f \circ T^m(x_0)$).

Let us observe the path-components of $\widetilde F$. It is easy to see that they are the following: for points
in $\pi^{-1}(X\setminus\gamma)$ they are homeomorphic (via $\pi$) to the composants of $X$, and are continuous injective images of the real line. The set $\pi^{-1}(\gamma)$ breaks into countably many path-components, each being a continuous injective image of the closed half-line. It starts with the interval $W_n$ on the closed end and ends with the open ``topologist's sine curve'', adjacent to (containing in its closure) the next interval $W_{n+1}$. Let $Z$ be the collection of the closed
end-points of the path-components contained in $\pi^{-1}(\gamma)$ (see Figure~1) and note
that $Z$ is in fact an orbit under $\widetilde T$.

\begin{figure}[ht]\label{component}
\begin{center}
\includegraphics[width=12.5cm]{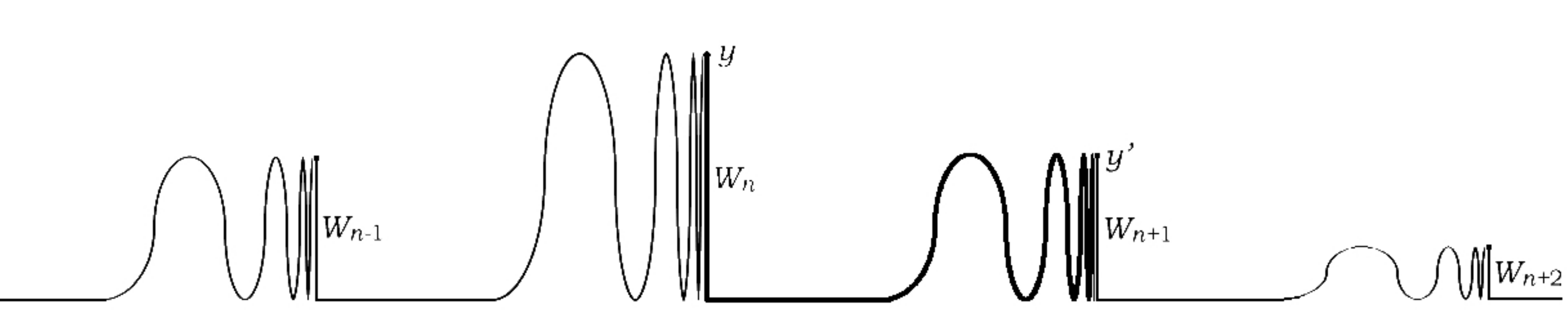}
\caption{\small\label{F:one}A piece of $\pi^{-1}(\gamma)$ with three complete path-components (one of them is marked with a thicker line). A point $y\in Z$ and its successor $y'\in Z$ are also marked. (The continuous perturbations caused by further terms of the sum defining $F$ are ignored.)}
\end{center}
\end{figure}

Let us call the \emph{successor} of $y\in Z$ the unique point $y'\in Z$, different from $y$, which belongs to the closure of the path-component of $y$. Obviously, the successor of $y$ is $\widetilde Ty$, but it is crucial for us that the successor relation is defined without referring to $\widetilde T$, in purely topological terms.

So, let now $\varphi$ be an arbitrary self-homeomorphism of $\widetilde F$. It sends path-components to path-components, preserving the type (real line or closed half-line), hence it also preserves the set $Z$ of
the closed ends of the half-line type components. Moreover, it preserves the successor relation on $Z$. This easily implies that on $Z$, $\varphi$ coincides with some iterate $\widetilde T^n$ ($n\in\Z$), and because $Z$ is dense in $\widetilde F$ (being an orbit under a minimal map), $\varphi=\widetilde T^n$, which completes the proof that $\widetilde F$ is a Slovak space.
\smallskip

Since $(\widetilde F,\widetilde T)$ is an almost 1-1 extension of \xt\ which is 1-1 except on a countable set, it has the same \tl\ entropy as \xt\footnote{An almost 1-1 extension which is 1-1 except on a countable set is \emph{principal} (preserves the Kolmogorov--Sinai entropy of every \im). It is so, because the countable set in the base system has measure zero for every \im\ (every such measure is nonatomic, as long as the base system has no periodic points), and so is the measure of its preimage in the extension system. So, the extension is 1-1 up to measure, and the extension system is isomorphic to the base system, in particular has the same K--S entropy (again, this is true for every \im). By the variational principle, every principal extension also preserves the \tl\ entropy.}, which equals $t_0\htop(h)$. Moreover, if $h$ is (for example) \tl ly weakly mixing, then every irrational $t_0$ produces minimal $\phi_{t_0}$.\footnote{The values of $t_0$ for which the map $\phi_{t_0}$ is not minimal are precisely such that $\exp(2\pi it_0)$ belong to the multiplicative subgroup of the torus $S_1$ generated by the roots of unity and the \tl\ eigenvalues of $h$. Thus the property that all irrational numbers $t_0$ produce minimal $\phi_{t_0}$ holds not only for \tl ly weakly mixing minimal homeomorphisms, but also for those which have only rational \tl\ eigenvalues.} Since there are \tl ly weakly mixing minimal Cantor systems with arbitrary entropy in $[0,\infty]$\footnote{It is known (\cite{Leh}) that every aperiodic ergodic system has a topologically mixing strictly ergodic model.}, we deduce that the entropy of Slovak spaces can have any value within this range. \hfill $\square$
\end{proof}

\begin{remark}
Our Slovak space $\widetilde F$, being a continuum with uncountably many composants, is indecomposable (by the property (C3), see also (C4)). The system $(\widetilde F,\widetilde T)$ is a \tl\ extension of an irrational circle rotation.
\end{remark}

\begin{question}
Is every Slovak space $X$ an indecomposable continuum?
\end{question}

Theorem \ref{threecomposants} eliminates continua with three composants, so, in view of (C3), the above question concerns only the one-composant continua.

\begin{remark}\label{circ}
An analogous construction applied to the circle (in place of the generalized solenoid) and irrational rotation (in place of the map $T$), which produces a one-composant continuum, does not work. One obtains a graph of a map $F$ defined on the circle with the orbit of a selected point $x_0$ removed. Now, there are many homeomorphisms of the circle (not just powers of the rotation) which preserve this orbit (as a set), and many of them can be lifted and then extended continuously to the closure of the graph of $F$. We skip the details.
\end{remark}

Putting Theorem~\ref{T:slovak} and Proposition~\ref{P:Slovak entropy} together we obtain a negative answer to the question~(2) in the Introduction:

\begin{corollary}\label{C:Tpositive FT zero}
There exists a \ds\ given by a homeomorphism of a compact space with finite positive or even infinite entropy,  whose functional envelope on the group of homeomorphisms has entropy zero.
\end{corollary}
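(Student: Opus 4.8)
The plan is simply to combine the two principal results already at our disposal. First I would invoke Theorem~\ref{T:slovak}, which guarantees that Slovak spaces exist and, crucially, that their entropies exhaust the whole interval $[0,\infty]$. In particular, I would fix a Slovak space $X$ whose entropy is finite and positive (or, for the stronger conclusion, infinite) and let $T$ be one of the two generating minimal homeomorphisms. By the definition of a Slovak space together with Theorem~\ref{cardinality}, we have $H(X,X)=\{T^n:n\in\Z\}$, so the homeomorphism group is countable.

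Next I would apply Proposition~\ref{P:Slovak entropy}: since $H(X,X)$ is countable, the functional envelope $(H(X,X),F_T)$ has entropy zero. On the other hand, the entropy of $(X,T)$ is, by the very way the Slovak space was selected, equal to the prescribed finite positive (or infinite) value. This exhibits a homeomorphism $T$ of a compact space with finite positive (or infinite) entropy whose functional envelope on the group of homeomorphisms has entropy zero, which is exactly the assertion.

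The point worth stressing is that the corollary itself carries essentially no independent difficulty; it is a one-line juxtaposition of Theorem~\ref{T:slovak} and Proposition~\ref{P:Slovak entropy}. All the genuine work has been done earlier. The truly hard part is Theorem~\ref{T:slovak}: producing Slovak spaces at all, and moreover arranging that the generating homeomorphism have prescribed positive or infinite entropy, which was achieved by taking the suspension over a topologically weakly mixing minimal Cantor system of the desired entropy and then forming the almost 1-1 extension built from the ``topologist's sine curve'' function $f$. Proposition~\ref{P:Slovak entropy} is likewise already established, resting on the observation that a countable Polish homeomorphism group is discrete and hence supports only zero-entropy actions. Consequently, for this corollary there is no obstacle to overcome: I would merely record the combination of the two preceding statements.
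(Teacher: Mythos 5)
Your proposal is correct and follows exactly the paper's argument: the corollary is obtained by combining Theorem~\ref{T:slovak} (existence of Slovak spaces with any prescribed entropy in $[0,\infty]$) with Proposition~\ref{P:Slovak entropy} (countable homeomorphism group implies zero entropy for the functional envelope). Nothing further is needed.
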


\section{Surjections of Slovak spaces}\label{lastsection}

In this section we are going to study surjective maps of the Slovak spaces constructed in the preceding section, and we are going to show that every non-invertible surjection has a fixed point. In particular, if $\varphi\colon\widetilde F\to\widetilde F$ is minimal then it is invertible.
\smallskip

From now on, if $A$ is a subset of $X$, we will write $\widetilde A$ to denote $\pi^{-1}(A)$.

\begin{lemma}\label{composant lemma}If $\varphi:\widetilde F\to\widetilde F$ is a continuous surjection then
$\varphi^{-1}(\widetilde\gamma)=\widetilde\gamma$ (hence, by surjectivity, $\varphi(\widetilde\gamma)=\widetilde\gamma$).
\end{lemma}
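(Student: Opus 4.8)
The plan is to analyze $\widetilde F$ through its path-components, which come in exactly two kinds (cf.\ the description in the proof of Theorem~\ref{T:slovak}): the lines $P_\beta=\pi^{-1}(\beta)$, one for each composant $\beta\neq\gamma$ of $X$, and the countably many ``half-lines'' $P^{(n)}$ whose union is $\widetilde\gamma$. The first thing I would record is a \emph{structural lemma}: a subcontinuum $K\subseteq\widetilde F$ is proper if and only if $\pi(K)\subsetneq X$. Indeed, if $\pi(K)=X$ then, since $\pi$ has singleton fibers over $X\setminus\gamma$, $K$ contains the dense set $\widetilde F\setminus\widetilde\gamma$, so $K=\widetilde F$. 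From this (together with the fact that the closure of the graph of $F$ over a single dense composant $\beta\neq\gamma$ already recovers all of $\widetilde F$) one gets that each line $P_\beta$ is dense while each half-line has proper, hence non-dense, closure $\overline{P^{(n)}}=P^{(n)}\cup W_{n+1}$.

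A first, easy consequence uses only surjectivity. Since $\varphi$ is continuous it maps each path-component into a path-component. If $q\notin\widetilde\gamma$ then $P_q$ is dense, so $\varphi(P_q)$ is dense (as $\varphi(\overline{P_q})=\varphi(\widetilde F)=\widetilde F$), whence the path-component containing $\varphi(q)$ is dense, i.e.\ a line, i.e.\ disjoint from $\widetilde\gamma$. This gives $\varphi(\widetilde F\setminus\widetilde\gamma)\subseteq\widetilde F\setminus\widetilde\gamma$, that is, $\varphi^{-1}(\widetilde\gamma)\subseteq\widetilde\gamma$, and by surjectivity also $\widetilde\gamma\subseteq\varphi(\widetilde\gamma)$. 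Next I would identify $\widetilde\gamma$ as a single composant of the indecomposable continuum $\widetilde F$ (indecomposability is noted in the Remark): by the structural lemma every proper subcontinuum through $q\in\widetilde\gamma$ projects into one composant of $X$ meeting $\gamma$, hence into $\gamma$, so it sits inside $\widetilde\gamma$; conversely any two points of $\widetilde\gamma$ lie in a common finite ``chain'' $\bigcup_{i=m}^{k}\overline{P^{(i)}}$, which is a proper subcontinuum. Thus the composant of $q$ is exactly $\widetilde\gamma$.

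The remaining, and main, point is $\varphi(\widetilde\gamma)\subseteq\widetilde\gamma$. The obstacle is that a continuous surjection need not carry proper subcontinua to proper subcontinua, so one must exclude the possibility that $\varphi$ smears a half-line (or a finite chain) across all of $\widetilde F$. The key observation that removes this obstacle is a path-component count: a finite chain $C=\bigcup_{i=m}^{k}\overline{P^{(i)}}$ has only finitely many path-components, so its continuous image $\varphi(C)$ has only finitely many; but $\widetilde F$ has uncountably many path-components (uncountably many composants $\beta\neq\gamma$ of $X$ by (C4)), so $\varphi(C)\neq\widetilde F$. Hence $\varphi(C)$ is a proper subcontinuum and therefore lies in one composant. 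Since any two points of $\widetilde\gamma$ lie in such a chain $C$, their images lie in the same composant, so $\varphi(\widetilde\gamma)$ is contained in a single composant $\kappa$. Combining with $\widetilde\gamma\subseteq\varphi(\widetilde\gamma)\subseteq\kappa$ and the fact that $\widetilde\gamma$ is itself a composant (composants being pairwise disjoint) forces $\kappa=\widetilde\gamma$. Together with the easy direction this yields $\varphi^{-1}(\widetilde\gamma)=\widetilde\gamma$, and then $\varphi(\widetilde\gamma)=\widetilde\gamma$ by surjectivity.

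I expect the path-component count to be the crux; the rest is bookkeeping resting on the structural lemma and the composant description. The routine checks I would be most careful about are that the two families really exhaust the path-components of $\widetilde F$ and that lines are dense while half-lines are not, and that the finite chains are genuinely proper subcontinua with finitely many path-components — all of which follow from the explicit description of $\widetilde F$ in the proof of Theorem~\ref{T:slovak} combined with the structural lemma.
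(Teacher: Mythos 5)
Your proof is correct and follows essentially the same route as the paper's: the complement of $\widetilde\gamma$ is handled by the density of the line components versus the non-density of the half-line components, and $\varphi(\widetilde\gamma)$ is confined to a single composant by a path-component count against the uncountably many path-components of $\widetilde F$. The only cosmetic difference is that the paper applies the count to $\varphi(\widetilde\gamma)$ as a whole and then invokes Lemma~\ref{compan}, whereas you apply it to finite chains of half-line closures and use that a proper subcontinuum lies in a single composant; both rest on the same observation.
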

\begin{proof} Note that $\varphi(\widetilde\gamma)$ is not the whole space (it has too few
path-components for that). So, by Lemma \ref{compan}, $\varphi(\widetilde\gamma)$ is contained in one composant. It now suffices to show that the complement of $\widetilde\gamma$ is mapped into itself. The path-components of that complement are composants $\widetilde\alpha$ of $\widetilde F$ different from $\widetilde\gamma$. Each of them is both pathwise connected and dense in $\widetilde F$ (see (C1)) and so must be its image $\varphi(\widetilde\alpha)$. Thus this image is contained in a dense path-component of $\widetilde F$. The path-components of $\widetilde\gamma$ are not dense, hence $\varphi(\widetilde\alpha)$ is contained in a composant different from $\widetilde\gamma$, which ends
the proof. \hfill $\square$
\end{proof}

Let $C_n$ denote the path-component of $\widetilde\gamma$ containing the interval $W_n$ (see Figure~1). By Lemma \ref{composant lemma} (and continuity of $\varphi$) $\varphi(C_{n})$ is
a subset of some (unique) path-component $C_{m}$. We define $\zeta\colon\Z\to\Z$ by the rule $\zeta(n)=m \iff \varphi(C_{n})\subset C_{m}$. Note that if $\zeta(n)=m$ then $\varphi(\overline{C_{n}})=\varphi(C_{n}\cup W_{n+1})\subset \overline{C_{m}} \subset C_m\cup C_{m+1}$. This implies that either $\zeta(n+1)=\zeta(n)$ or $\zeta(n+1)=\zeta(n)+1$. Let us denote $\mathbb D=\{n\in\Z:\zeta(n+1)=\zeta(n)\}$.

\begin{lemma}\label{mini1}
If\,\ $\mathbb D$ is bounded from below or above then $\varphi=\widetilde T^l$ for some $l\in\Z$.
\end{lemma}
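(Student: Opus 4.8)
The plan is to show that boundedness of $\mathbb D$ on one side forces the combinatorial map $\zeta$ to be an eventual (and then global) shift $n\mapsto n+l$, and then to upgrade this combinatorial rigidity to the genuine topological equality $\varphi=\widetilde T^l$. Recall from the construction that $\zeta$ is nondecreasing with jumps of size $0$ or $1$: we always have $\zeta(n+1)\in\{\zeta(n),\zeta(n)+1\}$, and $\mathbb D=\{n:\zeta(n+1)=\zeta(n)\}$ is precisely the set of indices where $\zeta$ fails to increase. So $\mathbb D$ records the ``stalling'' steps of $\zeta$, and the complement records the genuine $+1$ steps.

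**Main step: $\mathbb D$ one-sidedly bounded forces $\zeta$ to be a shift.**

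Suppose first $\mathbb D$ is bounded above, say $\mathbb D\subseteq(-\infty,N]$. Then for every $n>N$ we have $\zeta(n+1)=\zeta(n)+1$, so $\zeta$ behaves as a pure $+1$ shift on $(N,\infty)$; writing $l=\zeta(N+1)-(N+1)$ we get $\zeta(n)=n+l$ for all $n>N$. The key point I expect to need is that this forces $\mathbb D$ to be empty altogether, because $\zeta$ cannot stall even once without contradicting surjectivity of $\varphi$: each stall leaves a path-component $C_m$ without preimage among the $C_n$, and since $\varphi(\widetilde\gamma)=\widetilde\gamma$ by Lemma~\ref{composant lemma}, the components $C_m$ must all be hit. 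More carefully, the number of indices $n\le k$ with $\zeta(n)\le m$ must match up, via surjectivity onto $\widetilde\gamma$, with the count of components up to level $m$; a single stall destroys this balance in a way that cannot be repaired on the unbounded side. Hence $\zeta(n)=n+l$ for \emph{all} $n\in\Z$, and the symmetric argument handles the case where $\mathbb D$ is bounded below (working with $\varphi$ restricted to the lower tail, or equivalently replacing $\varphi$ by an argument on the reversed indexing). I expect this surjectivity bookkeeping to be the main obstacle: one must argue that the discrete map $\zeta$ is not merely eventually a shift but is a genuine bijection $n\mapsto n+l$, ruling out any stalling.

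**Upgrading to $\varphi=\widetilde T^l$.**

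Once $\zeta(n)=n+l$ for all $n$, I would pin down $\varphi$ on the distinguished orbit $Z$ of closed endpoints. Since $\varphi(C_n)\subset C_{n+l}$ and $\varphi$ respects closures, $\varphi$ carries the closed endpoint of $C_n$ (a point of $Z$) to the closed endpoint of $C_{n+l}$; but these endpoints are exactly the points $\widetilde T^n(y)$ for a base point $y\in Z$, so $\varphi$ and $\widetilde T^l$ agree on every point of $Z$. Here I would reuse the purely topological characterization of $Z$ and of the successor relation established in the proof of Theorem~\ref{T:slovak}: $\varphi$ preserves path-components and their closures, hence preserves $Z$ and the successor relation, which forces the shift by $l$ on $Z$ to be exact rather than up to finitely many corrections. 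Finally, since $Z$ is an orbit under the minimal homeomorphism $\widetilde T$, it is dense in $\widetilde F$, and two continuous maps agreeing on a dense set coincide; therefore $\varphi=\widetilde T^l$ everywhere, completing the proof.
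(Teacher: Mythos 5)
There are two genuine gaps here, and both steps of your plan run into them. First, your main step tries to upgrade ``$\zeta(n)=n+l$ eventually'' to ``$\zeta(n)=n+l$ for all $n$'' by arguing that each stall leaves some path-component $C_m$ with no preimage among the $C_n$. That counting claim is false: if $\zeta$ stalls, say, exactly once at $n_1$, then $\zeta(n)=n+l$ for $n>n_1$ and $\zeta(n)=n+l+1$ for $n\le n_1$, and the range of $\zeta$ is still all of $\Z$ (the two half-lines overlap at $n_1+l+1$), so no component is missed and surjectivity is not visibly violated. Emptiness of $\mathbb D$ is indeed true in the end, but only as a \emph{consequence} of $\varphi=\widetilde T^l$ (this is exactly the remark the paper makes after the lemma); it cannot serve as a stepping stone, and fortunately it is not needed: the eventual identity $\zeta(n)=n+l$ for $n>n_0$ already suffices, because the corresponding fibers $W_n$ with $n>n_0$ contain a forward $\widetilde T$-orbit and hence form a dense set by minimality.

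Second, your upgrading step assumes that $\varphi$ carries the closed endpoint of $C_n$ to the closed endpoint of $C_{n+l}$, citing the successor-relation argument from Theorem~\ref{T:slovak}. That argument is valid only for homeomorphisms, which map path-components \emph{onto} path-components and preserve the topological type of their ends. Here $\varphi$ is merely a continuous surjection (indeed, the whole point of the lemma is to conclude a posteriori that it is a power of $\widetilde T$), so $\varphi(C_n)$ is just some path-connected subset of $C_{n+l}$, and even knowing $\varphi(W_n)\subseteq W_{n+l}$ does not force endpoints to go to endpoints. The paper's proof sidesteps both problems: from $\varphi(W_n)\subseteq W_{n+l}$ for $n>n_0$ one gets $\pi\circ\varphi=T^l\circ\pi$ on the dense set $\bigcup_{n>n_0}W_n$, hence (the agreement set of two continuous maps being closed) on all of $\widetilde F$; restricting to the dense, $\widetilde T$-invariant union of singleton fibers of $\pi$ then yields $\varphi=\widetilde T^l$ there, and density finishes the argument. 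You would need to replace both of your key steps by something along these lines.
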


\begin{proof}
Assume $\mathbb D$ is bounded from above (the other case is symmetric). Therefore there exist $l, n_0\in\Z$, such that $\zeta(n)=n+l$ for all $n\geq n_0$. This means that for $n> n_0$,
$\varphi$ sends the intervals $W_{n}$ into the intervals $W_{n+l} = \widetilde T^l(W_n)$. Hence, on the dense set $\bigcup_{n>n_0}W_n$, we have the equality $\pi\circ\varphi = T^l\circ\pi$. This equality (between two continuous maps from $\widetilde F$ into $X$) holds on a closed set, hence it holds on the whole $\widetilde F$. In particular, it holds on the $\widetilde T$-invariant union of the singleton fibers by $\pi$. Here, the equality can be rewritten as $\varphi = \pi^{-1}\circ T^l \circ \pi$. But on this set $\pi^{-1}\circ T^l \circ \pi$ equals $\widetilde T^l$. We have shown that the equality $\varphi = \widetilde T^l$ holds on a dense set, which clearly concludes the proof. \hfill $\square$
\end{proof}

Note that it follows from the proof of the lemma \ref{mini1} that for any continuous and surjective $\varphi\colon\widetilde F\to\widetilde F$ the set $\mathbb D$ is either empty or unbounded from both sides.

\begin{lemma}\label{mini2}
If a function $\varphi\colon\widetilde F\to\widetilde F$ is continuous and surjective then $\varphi$ is a minimal homeomorphism or it has fixed point.
\end{lemma}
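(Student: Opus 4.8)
The plan is to split the argument according to whether the set $\mathbb D$ is empty, since by the remark following Lemma~\ref{mini1} the set $\mathbb D$ is, for a continuous surjection $\varphi$, either empty or unbounded from both sides. If $\mathbb D=\emptyset$, then $\zeta(n+1)=\zeta(n)+1$ for all $n$, so $\zeta(n)=n+l$ with $l=\zeta(0)$; in particular $\mathbb D$ is bounded and Lemma~\ref{mini1} gives $\varphi=\widetilde T^{\,l}$. For $l\neq0$ the map $\widetilde T^{\,l}$ is minimal: by Theorem~\ref{continuum} the space $\widetilde F$ is a non-degenerate, hence connected, continuum, $\widetilde T$ is minimal, and minimal maps on connected spaces are totally minimal. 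For $l=0$ we get $\varphi=\mathsf{id}$, which fixes every point. Either way the conclusion holds, so from now on I assume $\mathbb D\neq\emptyset$, i.e. $\mathbb D$ is unbounded on both sides.

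Set $d(n)=\zeta(n)-n$. Since $\zeta(n+1)-\zeta(n)\in\{0,1\}$, the sequence $d$ is non-increasing and drops by exactly one precisely at the points of $\mathbb D$. Unboundedness of $\mathbb D$ on both sides forces $d(n)\to+\infty$ as $n\to-\infty$ and $d(n)\to-\infty$ as $n\to+\infty$, and because $d$ moves only in unit downward steps it must attain the value $0$; thus there is an $n^*$ with $\zeta(n^*)=n^*$. By the definition of $\zeta$ this means $\varphi(C_{n^*})\subset C_{n^*}$, whence, by continuity of $\varphi$, $\varphi(\overline{C_{n^*}})\subset\overline{C_{n^*}}$. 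Here I recall the structure established in the proof of Theorem~\ref{T:slovak}: $\overline{C_{n^*}}=C_{n^*}\cup W_{n^*+1}$, the ray $C_{n^*}$ (which contains the arc $W_{n^*}$) and the limit arc $W_{n^*+1}$ lie in different path-components, and $\overline{C_{n^*}}\cap C_{n^*+1}=W_{n^*+1}$.

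To extract a fixed point I distinguish the two possible values $\zeta(n^*+1)\in\{\zeta(n^*),\zeta(n^*)+1\}=\{n^*,n^*+1\}$. If $\zeta(n^*+1)=n^*+1$, then $\varphi(C_{n^*+1})\subset C_{n^*+1}$; since $W_{n^*+1}\subset C_{n^*+1}$ while also $\varphi(W_{n^*+1})\subset\overline{C_{n^*}}$, intersecting yields $\varphi(W_{n^*+1})\subset C_{n^*+1}\cap\overline{C_{n^*}}=W_{n^*+1}$, a continuous self-map of an arc, which has a fixed point by the intermediate value theorem. If $\zeta(n^*+1)=n^*$, then $\varphi(C_{n^*+1})\subset C_{n^*}$, so $\varphi(W_{n^*+1})\subset C_{n^*}$ and hence $\varphi(\overline{C_{n^*}})\subset C_{n^*}$. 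In this case $K:=\varphi(\overline{C_{n^*}})$ is a subcontinuum of the ray $C_{n^*}$ satisfying $\varphi(K)\subset K$, and once $K$ is shown to be an arc the intermediate value theorem again supplies a fixed point.

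The main obstacle is precisely this last claim, where the topologist's sine curve pathology intervenes: $C_{n^*}$ is only a continuous injective (not homeomorphic) image of a half-line, so a compact connected subset need not a priori be an arc. I would argue that $K$, being compact and disjoint from $W_{n^*+1}$, cannot accumulate on $W_{n^*+1}$, for a limit point of such a sequence would lie in $\pi^{-1}(T^{n^*+1}(x_0))=W_{n^*+1}$, contradicting $K\subset C_{n^*}$ together with the closedness of $K$. Consequently $\pi(K)$ stays bounded away from $T^{n^*+1}(x_0)$, and on that initial portion of the ray the oscillation of the graph of $F$ has not yet begun, so $C_{n^*}$ is there a genuinely embedded arc; a subcontinuum of an arc is a subarc. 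Thus $K$ is an arc (or a single point, which is then itself the fixed point), which completes the proof.
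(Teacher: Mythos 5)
Your proof is correct, and it follows the paper's skeleton up to the point where an invariant set is produced: both arguments use Lemma~\ref{mini1} (together with the remark after it) to reduce to the case where $\mathbb D$ is unbounded on both sides, note that $\zeta(n)-n$ is non-increasing with unit downward steps and therefore vanishes at some $n^*$, and conclude $\varphi(\overline{C_{n^*}})\subset\overline{C_{n^*}}$. From there the routes genuinely diverge. The paper finishes in one line: $\overline{C_{n^*}}$ is a closed topologist's sine curve, hence an arc-like continuum, and arc-like continua have the fixed-point property (the citation is \cite[Corollary 12.30]{Na}). You avoid this piece of continuum theory by a further case split on $\zeta(n^*+1)$: if $\zeta(n^*+1)=n^*+1$ then $\varphi$ maps the arc $W_{n^*+1}=\overline{C_{n^*}}\cap C_{n^*+1}$ into itself, while if $\zeta(n^*+1)=n^*$ then $K=\varphi(\overline{C_{n^*}})$ is a $\varphi$-invariant subcontinuum of the ray $C_{n^*}$ which, being compact and disjoint from $W_{n^*+1}$, is confined to a compact initial segment of the ray and is therefore an arc or a point; either way the intermediate value theorem yields the fixed point. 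The one delicate step in your route --- that $K$ cannot escape toward the oscillating end --- is handled correctly by your limit-point argument (a sequence in $K$ whose $\pi$-images tend to $T^{n^*+1}(x_0)$ would produce, by compactness, a point of $K\cap W_{n^*+1}=\emptyset$). A small cosmetic remark: every compact initial segment of the ray is automatically an embedded arc, since a continuous injection of a compact space into a Hausdorff space is an embedding, so you need not argue that the oscillation ``has not yet begun'' there; all that matters is that $K$ sits inside such a compact segment. Your explicit handling of $\mathbb D=\emptyset$ (giving $\varphi=\widetilde T^{\,l}$, which is either the identity or minimal by total minimality on the connected space $\widetilde F$) is also correct and somewhat more careful than the paper's terse appeal to Lemma~\ref{mini1}. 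In sum: the paper's proof is shorter but leans on a classical fixed-point theorem for arc-like continua; yours is longer but entirely elementary and self-contained.
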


\begin{proof}
By lemma \ref{mini1} it is enough to consider functions $\varphi$ such that the set $\mathbb D$ is unbounded from both sides. Therefore it is easy to see that there exists $n\in\Z$ such that $\zeta(n)=n$. That means $\varphi(\overline{C_{n}})\subset\overline{C_{n}}$. It is well known that the (closed) topologist's sine curve has the fixed point property (this is in fact true for every arc-like continuum, see \cite[Corollary 12.30]{Na}). Therefore $\varphi$ has a fixed point in $\overline{C_{n}}$. \hfill $\square$
\end{proof}

\begin{corollary}
There exists a non-degenerate continuum of type $(1,0)$ which is not homeomorphic to the circle.
\end{corollary}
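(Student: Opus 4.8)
The plan is to take any Slovak space $\widetilde F$ produced by Theorem~\ref{T:slovak} and to check directly that it is of type $(1,0)$ and is not homeomorphic to the circle. By construction $\widetilde F$ carries the minimal homeomorphism $\widetilde T$, and by Theorem~\ref{continuum} it is a non-degenerate continuum; hence the first index of its type equals $1$, and the whole issue reduces to showing that the second index is $0$, i.e.\ that $\widetilde F$ admits no minimal non-invertible continuous self-map.

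For this I would let $\varphi\colon\widetilde F\to\widetilde F$ be an arbitrary minimal continuous map and argue that it is automatically a homeomorphism. First, minimality forces $\varphi$ to be surjective: the image $\varphi(\widetilde F)$ is a nonempty closed set satisfying $\varphi(\varphi(\widetilde F))\subseteq\varphi(\widetilde F)$, i.e.\ a subinvariant closed set, and a minimal system has no nonempty proper closed subinvariant subset, so $\varphi(\widetilde F)=\widetilde F$. Being a continuous surjection, $\varphi$ falls under Lemma~\ref{mini2}, which tells us that $\varphi$ is either a minimal homeomorphism or has a fixed point. A fixed point $p$, however, would make $\{p\}$ a nonempty proper (here we use non-degeneracy) closed subinvariant subset, contradicting minimality; thus the second alternative is excluded and $\varphi$ is a homeomorphism. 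This proves that every minimal self-map of $\widetilde F$ is invertible, so $\widetilde F$ is of type $(1,0)$. I emphasize that essentially all the substance has already been front-loaded into Lemmas~\ref{mini1} and~\ref{mini2}; the present step is merely their assembly, together with the elementary observation that a minimal map on a space with more than one point is fixed-point free.

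Finally, to distinguish $\widetilde F$ from the circle I would invoke the defining property of a Slovak space, namely that $H(\widetilde F,\widetilde F)=\{\widetilde T^n:n\in\Z\}$ is cyclic (isomorphic to $\Z$), whereas the homeomorphism group of the circle is not cyclic, as it already contains the full one-parameter family of rotations. Alternatively one may use the Remark following Theorem~\ref{T:slovak}, that $\widetilde F$ is an \emph{indecomposable} continuum while the circle is decomposable; either topological invariant separates the two spaces. I do not expect any genuine obstacle in proving the corollary itself: the only things to be careful about are that minimal maps are surjective and fixed-point free (both immediate) and that $\widetilde F$ carries a concrete invariant forbidding a homeomorphism with the circle. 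The real difficulty of the whole result lies in the earlier construction of $\widetilde F$ and in Lemmas~\ref{mini1} and~\ref{mini2}; this corollary is the harvest of that work and answers the problem posed in~\cite{BKS}.
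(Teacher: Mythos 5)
Your proposal is correct and follows essentially the same route the paper intends: the corollary is harvested directly from Lemma~\ref{mini2} (a minimal map is surjective and fixed-point free, hence must be the homeomorphism alternative), combined with the fact that a Slovak space has cyclic homeomorphism group (or, equivalently, is indecomposable), which rules out the circle. No gaps.
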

Note that in fact in theorem \ref{T:slovak} we have constructed an uncountable family of different continua which are all of type $(1,0)$.

\begin{remark}
An analogous construction applied to an irrational rotation of the circle (see Remark \ref{circ}) does not produce a space of type $(1,0)$. There exist minimal homeomorphisms of the circle which preserve the backward orbit (with respect to the rotation) of the selected point $x_0$ and whose forward orbit of $x_0$ is disjoint from the rotation-orbit of $x_0$. Many of such homeomorphisms can be lifted and then extended to the closure of the graph of $F$, producing a non-invertible minimal map. Again, we skip more details.
\end{remark}


\end{document}